\newtheorem{theorem}{Theorem}[section]
\newtheorem{conjecture}[theorem]{Conjecture}
\newtheorem{corollary}[theorem]{Corollary}
\newtheorem{lemma}[theorem]{Lemma}
\numberwithin{theorem}{section}
\newcommand{\N}{\mathbb{N}}
\def \suchthat {\text{ } | \text{ } }
\def \N {\mathbb{N}}
\begin{document}

\title{Generalized Alder-Type Partition Inequalities}

\author{Liam Armstrong}
\address{Oregon State University}
\email{armstrli@oregonstate.edu}
\author{Bryan Ducasse}
\address{University of Central Florida}
\email{bducasse77@knights.ucf.edu}
\author{Thomas Meyer}
\address{Amherst College}
\email{tmeyer23@amherst.edu}
\author{Holly Swisher}
\address{Oregon State University}
\email{swisherh@oregonstate.edu}

\thanks{This work was supported by NSF grant DMS-2101906 via the 2022 Oregon State University REU in Number Theory.}

\subjclass[2010]{05A17, 05A20, 11P81, 11P84}

\keywords{Alder's conjecture, $d$-distinct partitions, partition inequalities}

\begin{abstract}
In 2020, Kang and Park conjectured a ``level $2$" Alder-type partition inequality which encompasses the second Rogers-Ramanujan Identity.  Duncan, Khunger, the fourth author, and Tamura proved Kang and Park's conjecture for all but finitely many cases utilizing a  ``shift" inequality and conjectured a further, weaker generalization that would extend both Alder's (now proven) as well as Kang and Park's conjecture to general level.  Utilizing a modified shift inequality, Inagaki and Tamura have recently proven that the Kang and Park conjecture holds for level $3$ in all but finitely many cases.  They further conjectured a stronger shift inequality which would imply a general level result for all but finitely many cases. Here, we prove their conjecture for large enough $n$, generalize the result for an arbitrary shift, and discuss the implications for Alder-type partition inequalities.
\end{abstract}

\maketitle

\section{Introduction}
A partition of a positive integer $n$ is a non-increasing sequence of positive integers, called parts, that sum to $n$.   Let $p(n \, | \text{ condition})$ count the number of partitions of $n$ that satisfy the specified condition, and define
\begin{align*}
q_d^{(a)}(n) &:= p(n\, | \text{ parts} \geq a \text{ and differ by at least } d), \\
Q_d^{(a)}(n)  &:= p(n\, | \text{ parts} \equiv \pm a \!\!\! \pmod{d+3}), \\
\Delta_d^{(a)}(n) &:= q_d^{(a)}(n)-Q_d^{(a)}(n).
\end{align*}
Euler's well-known partition identity, which states that the number of partitions of $n$ into distinct parts equals those into odd parts, can be written as $\Delta_1^{(1)}(n)=0$.  Moreover, the celebrated first and second Rogers-Ramanujan identities, written here in terms of $q$-Pochhammer notation\footnote{$(a;q)_0:=1$ and $(a;q)_n:=\prod_{k=0}^{n-1}(1-aq^k)$ for $1\leq n\leq \infty$},
\begin{align*}
\sum_{n=0}^\infty\frac{q^{n^2}}{(q;q)_n} &=\frac{1}{(q;q^5)_\infty(q^4;q^5)_\infty},\\
\sum_{n=0}^\infty\frac{q^{n^2+n}}{(q;q)_n} &=\frac{1}{(q^2;q^5)_\infty(q^3;q^5)_\infty},
\end{align*}
are interpreted in terms of partitions as $\Delta_2^{(1)}(n)=0$ and $\Delta_2^{(2)}(n)=0$, respectively.

Schur \cite{SCHUR} proved that the number of partitions of $n$ into parts differing by at least $3$, where no two consecutive multiples of 3 appear, equals the number of partitions of $n$ into parts congruent to $\pm1\pmod{6}$, which yields that $\Delta_3^{(1)}(n)\geq 0$.  Lehmer \cite{Lehmer} and Alder \cite{Alder_nonex} proved that such a pattern of identities can not continue by showing that no other such partition identities can exist.  However, in 1956 Alder \cite{Alder_conj} conjectured a different type of generalization.  Namely, that for all $n,d\geq 1$,
\begin{equation}\label{Alder_conj}
\Delta_d^{(1)}(n)\geq 0.
\end{equation}

In 1971, Andrews \cite{Andrews} proved \eqref{Alder_conj} when $d=2^k-1$ and $k\geq 4$, and in 2004, Yee \cite{Yee_pnas,Yee} proved \eqref{Alder_conj} for $d\geq 32$ and $d=7$, both using $q$-series and combinatorial methods.  Then in 2011, Alfes, Jameson, and Lemke Oliver \cite{AJLO} used asymptotic methods and detailed computer programming to prove the remaining cases of $4\leq d\leq 30$ with $d\neq 7,15$.

It is natural to ask whether \eqref{Alder_conj} can be generalized to $a=2$ in order to encapsulate the second Rogers-Ramanujan identity, or perhaps even be generalized to arbitrary $a$.  

In 2020, after observing that $\Delta_d^{(2)}(n)\geq 0$ does not hold for all $n,d\geq 1$, Kang and Park \cite{KANG-PARK} defined
\begin{align*}
Q_d^{(a,-)}(n)  &:= p(n\, | \text{ parts} \equiv \pm a \!\!\! \pmod{d+3},\text{ excluding the part } d+3-a), \\
\Delta_d^{(a,-)}(n) &:= q_d^{(a)}(n)-Q_d^{(a,-)}(n),
\end{align*}
and conjectured that for all $n,d\geq 1$,
\begin{equation}\label{KP_conj}
\Delta_d^{(2,-)}(n)\geq 0.
\end{equation}

Kang and Park \cite{KANG-PARK} proved \eqref{KP_conj} when $n$ is even, $d=2^k-2$, and $k\geq 5$ or $k=2$.   Then in 2021, Duncan, Khunger, the fourth author, and Tamura \cite{DKST} proved \eqref{KP_conj} for all $d\geq 62$.  Exploring the question for larger $a$, they conjectured that for all $n,d\geq 1$,
\begin{equation}\label{DKST_conj}
\Delta_d^{(3,-)}(n)\geq 0,
\end{equation}
but found that when $a\geq 4$, the removal of one additional part appears to be both necessary and sufficient to obtain such a result for all $n,d\geq 1$.  Letting 
\begin{align*}
Q_d^{(a,-,-)}(n)  &:= p(n\, | \text{ parts} \equiv \pm a \!\!\! \pmod{d+3},\text{ excluding the parts } a \text{ and } d+3-a), \\
\Delta_d^{(a,-,-)}(n) &:= q_d^{(a)}(n)-Q_d^{(a,-,-)}(n),
\end{align*}
Duncan et al. \cite{DKST} conjectured that if $a,d\geq 1$ such that $1 \leq a \leq d+2$, then for all $n \geq 1$,
\begin{equation}\label{gen_dkst}
\Delta_d^{(a,-,-)}(n) \geq 0.
\end{equation}

Recently, Inagaki and Tamura \cite{Inagaki-Tamura} proved \eqref{DKST_conj} for $d\geq 187$ and $d=1,2,91,92,93$, and further proved that $\Delta_d^{(4,-)}(n)\geq 0$ for $d\geq 249$ and $121\leq d\leq 124$ as a corollary to a result for general $a$ for certain residue classes of $d$.  Inagaki and Tamura \cite{Inagaki-Tamura} were also able to prove the general conjecture \eqref{gen_dkst} of Duncan et al. \cite{DKST} for sufficiently large $d$ with respect to $a$, namely when $\lceil \frac{d}{a} \rceil \geq 2^{a+3}-1$.

The proof of \eqref{KP_conj} for $d\geq 62$ by Duncan et al. \cite{DKST} utilized a particular shift identity.  Namely, they showed that if $d\geq 31$ or $d=15$, then for  $n \geq 1$,
\begin{equation}\label{shift_2}
q_d^{(1)}(n) \geq Q_{d-2}^{(1,-)}(n).
\end{equation}
The proof of \eqref{DKST_conj} for $d\geq 187$ or $d=1,2,91,92,93$ by Inagaki and Tamura \cite{Inagaki-Tamura} utilized a stronger shift identity that holds for large enough $n$ with respect to $d$.  Namely, they showed that if $d\geq 63$ or $d=31$, then for  $n \geq d+2$, 
\begin{equation}\label{shift_3}
q_d^{(1)}(n) \geq Q_{d-3}^{(1,-)}(n).
\end{equation} 

Given a choice of $a$, it is natural to ask for which $n,d\geq 1$,
\begin{equation}\label{question}
\Delta_d^{(a,-)}(n)\geq 0.
\end{equation}
Inagaki and Tamura \cite{Inagaki-Tamura} posed the following shift identity conjecture, which they further determined can be used to obtain answers to \eqref{question} and a vast improvement on the bounds for \eqref{gen_dkst}.  
\begin{conjecture}[Inagaki, Tamura \cite{Inagaki-Tamura}, 2022]\label{IT_conj}
Let $d\geq 12$ and $n\geq d+2$. Then
\[q_d^{(1)}(n)-Q_{d-4}^{(1,-)}(n)\geq 0.\]
\end{conjecture}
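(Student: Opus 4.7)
The plan is to prove the inequality for every $n \geq d+2$ by combining an explicit injection for the bulk regime with a sharp boundary analysis for the small values of $n$ not covered by that injection. Throughout, a partition $\mu$ counted by $Q_{d-4}^{(1,-)}(n)$ is encoded as
\[
\mu = (1^m, d^{c_1}, (2d-3)^{b_2}, (2d-1)^{c_2}, (3d-4)^{b_3}, (3d-2)^{c_3}, \ldots),
\]
where $m$ is the multiplicity of $1$ and $c_k, b_k$ record the multiplicities of $k(d-1)+1$ and $k(d-1)-1$, with $b_1 = 0$ since $d-2$ is excluded from the admissible parts.

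For the bulk regime I would construct an explicit map $\phi: \mu \mapsto \lambda \in q_d^{(1)}(n)$ in three stages. First, consolidate each close pair $(k(d-1)-1, k(d-1)+1)$ and each block of repeated large parts into single parts of the appropriate aggregate size. Second, enlarge the resulting distinct large parts upward so that consecutive gaps reach at least $d$. Third, absorb or release the resulting deficit/surplus in the multiplicity $m$ of $1$'s. Since each consolidation borrows only a bounded number of $1$'s and each gap-widening shifts an adjacent part by at most $d-3$, the hypothesis $d \geq 12$ together with $m$ being sufficiently large in this regime provides the necessary slack. Injectivity is enforced by recording, via residues modulo $d-1$ of strategic parts of $\lambda$, which consolidations took place in $\mu$.

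For the boundary regime $d+2 \leq n \leq N(d)$, where $N(d)$ is the explicit threshold at which the bulk injection first becomes valid, I would proceed by direct enumeration, bringing in admissible large parts one layer at a time. In the sub-range $d+2 \leq n < 2d-3$, only $1$ and $d$ are available as parts, so $Q_{d-4}^{(1,-)}(n) \leq 2$, while both $(n)$ and $(n-1,1)$ lie in $q_d^{(1)}(n)$ because $n-1 \geq d+1$, yielding $q_d^{(1)}(n) \geq 2$. In successive sub-ranges, incorporate the pair $(2d-3, 2d-1)$, then $(3d-4, 3d-2)$, and so on. Each sub-range has width $d-1$, admits only a bounded number of $Q$-partition shapes, and can be matched to distinct $q_d^{(1)}$-partitions by explicitly listing enough two-part and three-part $q_d^{(1)}$-partitions of $n$. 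The hypothesis $d \geq 12$ is what guarantees sufficient separation between these layers for the matching to remain unambiguous.

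The main obstacle will be certifying injectivity of the bulk map when $\mu$ simultaneously contains multiple close pairs and several repeated large parts, since the consolidation step can otherwise collapse structurally distinct inputs to the same output. This is to be handled by a careful bookkeeping scheme that attaches a signature to $\lambda$ recording the full multiplicity pattern of $\mu$ modulo $d-1$. A secondary difficulty will be matching the bulk map seamlessly to the boundary enumeration at the junction $n = N(d)$, which will be verified by checking that, for $n$ just above $N(d)$, the bulk construction specializes to one of the partitions produced by the boundary matching, so no cases fall through the crack.
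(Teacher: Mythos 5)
Your proposal is a plan rather than a proof, and the two places where it waves its hands are exactly the hard parts. First, the ``bulk'' map into $d$-distinct partitions is never actually defined, and its resource count does not work as asserted: the consolidation/gap-widening steps are funded by borrowing from the multiplicity $m$ of $1$'s, but in the bulk regime $m$ can be $0$ (take $\mu$ consisting solely of many copies of $2d-1$, or of $d$), so the claim that ``$m$ being sufficiently large in this regime provides the necessary slack'' is unsupported; note also that surplus cannot be ``released'' into $1$'s on the target side, since a $d$-distinct partition admits at most one part equal to $1$. Second, injectivity is deferred to an unspecified ``signature'' recorded in residues modulo $d-1$, which is precisely the obstacle you yourself identify; without an explicit scheme there is no way to check that distinct multiplicity patterns of close pairs and repeated parts do not collapse. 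Constructing such an injection directly into $d$-distinct partitions is essentially the content of the difficult work of Andrews and Yee on Alder's conjecture, and the paper deliberately avoids it: it invokes the Andrews--Yee bound $q_d^{(1)}(n)\geq\rho(T_{5,d};n)$ (valid only for $d\geq 63$, $n\geq 5d$) and instead builds an injection from partitions with parts in $S_d^N$ into partitions with parts in $T_{5,d}$, split according to whether $p_1+\alpha\geq (N-2)p_2$, with careful quantitative control in the second case.

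Be aware also that the statement you are proving is a conjecture which the paper itself does not fully establish: Theorem \ref{biglemon} with $N=4$ requires $d\geq\max\{63,46\cdot4-79\}=105$, so the paper obtains the inequality only for $d\geq 105$ (Corollary \ref{littlelemon}), leaving $12\leq d\leq 104$ open. Your target of all $d\geq 12$ is therefore strictly stronger than what the paper achieves, and the sketch gives no mechanism that would close that range. Your boundary analysis is fine for the first window $d+2\leq n<2d-3$ (this matches the spirit of the paper's Lemma \ref{bigsmalllemon}), but the claim that each successive window of width $d-1$ ``admits only a bounded number of $Q$-partition shapes'' is false as stated: the number of partitions counted by $Q_{d-4}^{(1,-)}(n)$ in later windows grows with $n$ (through the multiplicities of $1$ and $d$), so the layer-by-layer matching needs explicit counts and an explicit threshold $N(d)$, neither of which is supplied, and the junction with the undefined bulk map cannot be verified.
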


In this paper, we prove a generalized shift identity.  We have the following theorem.
\begin{theorem}\label{biglemon}
If $N\geq 2$, $d \geq \max\{63,46N-79 \}$, and $n \geq d+2$, then
\[ q_d^{(1)}(n) \geq Q_{d-N}^{(1,-)}(n). \]
\end{theorem}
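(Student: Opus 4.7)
My approach is to construct an explicit injection $\Phi\colon \QN(n)\hookrightarrow \q(n)$, generalizing those used for the shift identities \eqref{shift_2} and \eqref{shift_3}. For $N=2$ and $N=3$, the hypothesis $d\geq\max\{63,46N-79\}$ forces $d\geq 63$, so \eqref{shift_2} (which requires only $d\geq 31$) and \eqref{shift_3} (which requires $d\geq 63$ and $n\geq d+2$) apply directly and no new argument is needed. Thus the substance of the proof is in the range $N\geq 4$.

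Fix $N\geq 4$, and set $m := d-N+3$, so that the parts of a $\QN$-partition $\lambda$ lie in $\{1,\, m+1\}\cup\{km\pm 1 : k\geq 2\}$. Any two consecutive allowed parts are separated by only $2$ or $m-2 = d-N+1$, both strictly less than $d$, so $\lambda$ itself fails the gap condition defining $\q$ whenever it has more than one part; the role of $\Phi$ is precisely to enlarge these gaps to at least $d$ while preserving the total weight. I would classify $\QN$-partitions of $n$ by the multiplicities of the two small parts $1$ and $m+1$, together with the multiplicities and types (upper $km+1$ vs.\ lower $km-1$) of each occupied pair. On each class I would apply local ``merge-and-shift" operations that consolidate close pairs into a single larger part, compensating by redistributing weight to or from the small parts and from a chosen largest part; the assumption $n\geq d+2$ guarantees that at least one genuinely large reservoir part exists to absorb these adjustments. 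The output should be a strictly decreasing partition with all gaps at least $d$, hence an element of $\q(n)$.

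The main obstacle, and the source of the linear-in-$N$ bound $d\geq 46N-79$, will be verifying that $\Phi$ is \emph{injective} globally. Each step $N\mapsto N+1$ introduces a new family of ambiguous configurations, since parts from different pairs $(km\pm 1)$ can produce, after merging, the same intermediate value; resolving each such ambiguity requires enlarging $d$ by a constant amount. I expect the bookkeeping to show that image $\Phi(\lambda)\in\q(n)$ uniquely recovers the types and multiplicities of $\lambda$ provided $d$ absorbs the accumulated overhead from all $N-2$ additional cases beyond the base identity, yielding the bound $46N-79$. The universal constant $63$ plays the role of the startup cost carried over from the $N=3$ argument, required before any of the merge-and-shift moves produce a legitimate $\q$-partition.
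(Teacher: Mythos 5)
Your reduction of the cases $N=2,3$ to the known shift inequalities \eqref{shift_2} and \eqref{shift_3} is fine, but for $N\geq 4$ what you have written is a plan rather than a proof, and the plan has a structural problem that I do not think can be repaired in the form stated. You propose to inject the partitions counted by $Q_{d-N}^{(1,-)}(n)$ directly into the set of $d$-distinct partitions of $n$. The source partitions can repeat a single part arbitrarily many times (e.g.\ $\lambda=1+1+\cdots+1$, or many copies of $d-N+4$), whereas a $d$-distinct partition of $n$ has only on the order of $\sqrt{n/d}$ parts; so $\Phi$ must collapse an unbounded number of equal parts into a few widely spaced ones, not merely ``merge close pairs.'' Carrying out exactly this collapse while preserving weight and injectivity is the content of the Andrews--Yee machinery behind Lemma \ref{lem: qd-greater-than-t5d}, and it is the deepest input to every proof in this line of work; your sketch gives no definition of the merge-and-shift operations, no verification that the output satisfies the gap condition, no injectivity argument, and no derivation of the bound $46N-79$ beyond the assertion that you expect the bookkeeping to produce it. In addition, the claim that $n\geq d+2$ guarantees a large reservoir part is false for the all-ones partition.

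The paper avoids the direct injection entirely. It first invokes the Andrews--Yee lower bound $q_d^{(1)}(n)\geq\rho(T_{5,d};n)$ for $d\geq 63$ and $n\geq 5d$ (Lemma \ref{lem: qd-greater-than-t5d}), where $T_{5,d}$ is a fixed set of residue classes modulo $2d$; the problem then becomes a comparison of two unrestricted-multiplicity counts, $\rho(S_d^N;n)$ versus $\rho(T_{5,d};n)$, which is handled by an explicit injection defined on the multiplicity vectors after splitting $S^N$ into the subfamilies $S_1^N$ and $S_{(2,\beta)}^N$, together with a separate elementary count for the range $d+2\leq n\leq 7d+13$ where the asymptotic bound is unavailable. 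If you want to salvage your approach, the realistic route is to interpose $T_{5,d}$ (or some comparable intermediate set of parts with unrestricted multiplicities) in the same way, rather than to target $q_d^{(1)}$ directly.
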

As an immediate corollary of Theorem \ref{biglemon} we obtain Conjecture \ref{IT_conj} in the cases when $d \geq 105$. 


\begin{corollary}\label{littlelemon}
For $d \geq 105$, and $n \geq d+2$, 
\[ q_d^{(1)}(n) \geq Q_{d-4}^{(1,-)}(n). \]
\end{corollary}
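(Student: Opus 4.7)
The plan is to derive Corollary \ref{littlelemon} as a direct specialization of Theorem \ref{biglemon} at $N=4$. First I would verify the hypotheses: $N=4 \geq 2$ is immediate, and the bound on $d$ becomes $\max\{63,\, 46\cdot 4 - 79\} = \max\{63, 105\} = 105$, which matches precisely the assumption $d \geq 105$ of the corollary. The condition $n \geq d+2$ is identical in both statements. With both hypotheses of Theorem \ref{biglemon} verified for $N=4$, the conclusion $q_d^{(1)}(n) \geq Q_{d-4}^{(1,-)}(n)$ is exactly the specialization of the theorem's inequality at $N=4$.

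The only substantive step is the arithmetic check $46\cdot 4 - 79 = 105$, which determines the crossover point at which the $46N-79$ term in the $\max$ overtakes the constant $63$; for $N = 4$ this term is already the governing one. Consequently the range $d \geq 105$ in the corollary cannot be improved by Theorem \ref{biglemon} alone, and any sharpening would have to come either from tightening the $46N - 79$ bound in the theorem or from a separate argument tailored to the $N=4$ case.

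Since Conjecture \ref{IT_conj} asserts the same inequality for $d \geq 12$ and $n \geq d+2$, the corollary resolves the conjecture for all $d \geq 105$, leaving the finite window $12 \leq d \leq 104$ to be handled by other means (for instance, by direct computation in the spirit of \cite{AJLO}, or by a case-specific shift argument). There is no mathematical obstacle intrinsic to the corollary itself; all of the work is concentrated in the proof of Theorem \ref{biglemon}.
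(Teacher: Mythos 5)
Your proposal is correct and matches the paper exactly: Corollary \ref{littlelemon} is obtained by specializing Theorem \ref{biglemon} to $N=4$, where the bound $\max\{63,\,46\cdot 4-79\}=\max\{63,105\}=105$ gives precisely the stated hypothesis $d\geq 105$. The paper treats this as immediate and offers no further argument, so there is nothing to add.
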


Moreover, using the methods of Inagaki and Tamura \cite{Inagaki-Tamura} Corollary \ref{littlelemon} can be applied to obtain a more complete answer to \eqref{question} as well as stronger bounds for \eqref{gen_dkst}. 

\begin{theorem}\label{thm:gen_kp}
Let $a\geq 1$ and $d\not\equiv -3 \pmod{a}$ such that $\left\lceil\frac{d}{a}\right\rceil \geq 105$.  Then for all $n\geq1$,
\[
\Delta_d^{(a,-)}(n)\geq 0.
\]
Moreover, for $d\equiv -3 \pmod{a}$ then $\Delta_d^{(a,-)}(n)\geq 0$ for all $n\neq d+a+3$.
\end{theorem}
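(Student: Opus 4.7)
The plan is to apply the reduction techniques of Inagaki and Tamura \cite{Inagaki-Tamura} with Corollary \ref{littlelemon} as the key shift-inequality input. Their methods convert a level-$1$ shift inequality of the form $q_D^{(1)}(m) \geq Q_{D-N}^{(1,-)}(m)$, valid for $m \geq D+2$ and $D$ above some threshold, into the level-$a$ statement $\Delta_d^{(a,-)}(n) \geq 0$ by factoring through partitions at the dilated level $D = \lceil d/a \rceil$. Instantiating their framework with the shift parameter $N = 4$ and threshold $D_0 = 105$ supplied by Corollary \ref{littlelemon} yields the main conclusion of Theorem \ref{thm:gen_kp}.

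More concretely, for $d \not\equiv -3 \pmod a$ with $\lceil d/a \rceil = D \geq 105$, the plan is to construct a dilation-style injection from $Q_d^{(a,-)}(n)$ into $Q_{D-4}^{(1,-)}(n')$ for some integer $n' = n'(n,a,d)$, invoke Corollary \ref{littlelemon} to inject further into $q_D^{(1)}(n')$, and finally lift back via an inverse dilation into $q_d^{(a)}(n)$, which gives the desired injection $Q_d^{(a,-)}(n) \hookrightarrow q_d^{(a)}(n)$. For $d \equiv -3 \pmod a$, every part of a $Q_d^{(a,-)}$-partition is a multiple of $a$, so with $k = (d+3)/a$ we have $Q_d^{(a,-)}(n) = Q_{k-3}^{(1,-)}(n/a)$ when $a \mid n$ and $0$ otherwise. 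A direct computation then singles out the unique failure at $n = d+a+3 = (k+1)a$, where $Q_d^{(a,-)}(n) = 2$ (contributed by the partitions $((k+1)a)$ and $k+1$ copies of $a$) while $q_d^{(a)}(n) = 1$ for $a \geq 4$ (because $\lambda_i - \lambda_{i+1} \geq ka - 3$ forces at most one part in a partition of $(k+1)a$).

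The principal obstacle is twofold. First, we must verify that the Inagaki-Tamura reduction carries through cleanly with shift parameter $N = 4$: in particular, the dilation injections must be well-defined for all residues $d \pmod a$, including $a \nmid d+3$, where the parts of $Q_d^{(a,-)}$ are no longer all multiples of $a$ and must be tracked by residue class. Second, the hypothesis $n' \geq D+2$ of Corollary \ref{littlelemon} translates into a lower bound on $n$; for smaller $n$ the claim must be handled directly, exploiting that only a bounded number of partitions contribute on either side so that an enumeration (together with the classical Alder inequality at level~$1$) suffices to close the remaining cases.
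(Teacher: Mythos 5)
Your plan is essentially the paper's own argument: for $n\geq d+2a$ the paper chains Lemma \ref{Lem: st-thm with ceiling} (dilation to level $1$ at $D=\lceil d/a\rceil$), Corollary \ref{littlelemon}, Lemma \ref{a-to-1}, and the modified Andrews comparison (Lemma \ref{IT-modiefied-ST}) to get $q_d^{(a)}(n)\geq Q_d^{(a,-)}(n)$, and then disposes of $1\leq n\leq d+2a-1$ by the same direct enumeration you describe, with the exceptional case $d\equiv-3\pmod a$, $n=d+a+3$ arising exactly as you compute ($Q=2$ vs.\ $q=1$ for $a\geq4$). The only cosmetic difference is that the paper runs the large-$n$ reduction uniformly over all residues of $d$ modulo $a$ rather than splitting off $a\mid d+3$ at the outset.
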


As a corollary of Theorem \ref{thm:gen_kp} we obtain the following, which proves conjecture \eqref{gen_dkst} of Duncan et al. \cite{DKST} for $\left\lceil\frac{d}{a}\right\rceil \geq 105$.  We note that this bound is lower than that given by Inagaki and Tamura \cite[Thm. 1.8]{Inagaki-Tamura} when $a\geq 4$, and is significantly lower as $a$ grows.  

\begin{corollary}\label{thm:gen_dkst}
For all $a,d \geq 1$ such that $\left\lceil\frac{d}{a}\right\rceil \geq 105$, and $n\geq1$,
\[
\Delta_d^{(a,-,-)}(n)\geq 0.
\]
\end{corollary}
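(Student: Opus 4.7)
The plan is to deduce Corollary \ref{thm:gen_dkst} directly from Theorem \ref{thm:gen_kp} via a simple comparison of the functions $Q_d^{(a,-)}$ and $Q_d^{(a,-,-)}$. Since the partitions counted by $Q_d^{(a,-,-)}(n)$ are precisely those counted by $Q_d^{(a,-)}(n)$ subject to the additional restriction of avoiding the part $a$, we have $Q_d^{(a,-,-)}(n)\leq Q_d^{(a,-)}(n)$, and hence
\[
\Delta_d^{(a,-,-)}(n) \;\geq\; \Delta_d^{(a,-)}(n)
\]
for all $n,a,d\geq 1$.

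Under the hypothesis $\lceil d/a\rceil\geq 105$, Theorem \ref{thm:gen_kp} applies, and I would split into two cases. When $d\not\equiv -3\pmod{a}$, Theorem \ref{thm:gen_kp} gives $\Delta_d^{(a,-)}(n)\geq 0$ for every $n\geq 1$, so the displayed inequality immediately yields $\Delta_d^{(a,-,-)}(n)\geq 0$. When $d\equiv -3\pmod{a}$, the same reasoning handles every $n\neq d+a+3$, leaving only the single exceptional value $n=d+a+3$ to be verified separately.

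For the remaining case $d\equiv -3\pmod{a}$ and $n=d+a+3$, I would carry out a direct count. The allowed parts for $Q_d^{(a,-,-)}$ are those $\equiv\pm a\pmod{d+3}$ with both $a$ and $d+3-a$ removed, so the smallest allowed part is $\min\{d+3+a,\,2(d+3)-a\}=d+3+a$, where the inequality $d+3+a<2(d+3)-a$ follows from $d+3>2a$, which is guaranteed by $\lceil d/a\rceil\geq 105$. Consequently the only partition of $d+a+3$ contributing to $Q_d^{(a,-,-)}(d+a+3)$ is the one-part partition $(d+a+3)$, so $Q_d^{(a,-,-)}(d+a+3)=1$. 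On the other hand, $(d+a+3)$ itself is also counted by $q_d^{(a)}$, so $q_d^{(a)}(d+a+3)\geq 1$, giving $\Delta_d^{(a,-,-)}(d+a+3)\geq 0$.

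Once Theorem \ref{thm:gen_kp} is in hand, there is essentially no obstacle in this argument; the only delicate point is verifying that the single potentially problematic value $n=d+a+3$ does not spoil nonnegativity for $\Delta_d^{(a,-,-)}$, and the explicit count above resolves it by showing that the one ``missing'' partition counted by $Q_d^{(a,-)}$ but not $Q_d^{(a,-,-)}$ is exactly the one that would have caused the deficit.
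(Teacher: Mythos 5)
Your proposal is correct and follows essentially the same route as the paper: reduce to Theorem \ref{thm:gen_kp} via the monotonicity $\Delta_d^{(a,-,-)}(n)\geq\Delta_d^{(a,-)}(n)$, then handle the single exceptional case $d\equiv -3\pmod{a}$, $n=d+a+3$ by observing that $Q_d^{(a,-,-)}(d+a+3)=1$ while $q_d^{(a)}(d+a+3)\geq 1$. Your explicit verification that $d+a+3$ is the smallest available part (using $d+3>2a$) is a slightly more careful spelling-out of the same count the paper performs.
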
 
 
We now outline the rest of the paper. In Section \ref{prelemonaries}, we state a fundamental result of Andrews \cite{Andrews} and discuss some notation and lemmas used in the proofs of Theorems \ref{biglemon}, \ref{thm:gen_kp}, and \ref{thm:gen_dkst}. In Section \ref{biglemmaproof}, we prove Theorem \ref{biglemon}, and in Section \ref{conclusion}, we use Corollary \ref{littlelemon} to prove Theorem \ref{thm:gen_kp} and Corollary \ref{thm:gen_dkst}.  We conclude with additional remarks and discussion.

\section{Preliminaries}\label{prelemonaries}

For a nonempty set $A\subseteq \mathbb{N}$, define $\rho(A;n)$ to count the number of partitions of $n$ with parts in $A$.  The following theorem of Andrews \cite{Andrews} gives a way to compare the number of partitions of $n$ with parts coming from different sets.

\begin{theorem}[Andrews \cite{Andrews}, 1971] \label{st-thm}
Let $S = \{x_i\}_{i = 1}^\infty$ and $T = \{y_i\}_{i = 1}^\infty$ be two strictly increasing sequences of positive integers such that $y_1 = 1$ and $x_i \geq y_i$ for all $i$. Then
\[
\rho(T;n) \geq \rho(S;n).
\]
\end{theorem}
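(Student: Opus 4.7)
The plan is to construct an explicit injection from the set of partitions of $n$ with parts in $S$ into the set of partitions of $n$ with parts in $T$, which immediately yields $\rho(T;n)\geq \rho(S;n)$. Given a partition $\lambda$ of $n$ with parts in $S$, record its multiplicity vector $(m_i)_{i\geq 1}$, where $m_i\geq 0$ counts the occurrences of $x_i$ so that $\sum_{i\geq 1}m_i x_i=n$. Define $\phi(\lambda)$ to be the multiset consisting of $m_i$ copies of $y_i$ for each $i\geq 1$, together with $D:=\sum_i m_i(x_i-y_i)$ additional copies of $1$.

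The hypothesis $x_i\geq y_i$ forces $D\geq 0$, and the hypothesis $y_1=1$ ensures the padding uses a legitimate element of $T$. A direct calculation shows
\[
\sum_{i\geq 1} m_i y_i + D = \sum_i m_i y_i + \sum_i m_i(x_i-y_i) = \sum_i m_i x_i = n,
\]
so $\phi(\lambda)$ is indeed a partition of $n$ with parts in $T$.

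For injectivity, observe that strict monotonicity of $T$ together with $y_1=1$ gives $y_j\geq 2$ for all $j\geq 2$, so the padding (which uses only $1$'s) does not affect the multiplicity of any $y_j$ with $j\geq 2$. Consequently, the multiplicity of the integer $y_j$ in $\phi(\lambda)$ equals exactly $m_j$, and hence $(m_j)_{j\geq 2}$ can be read directly off $\phi(\lambda)$; the remaining entry $m_1$ is then pinned down by the defining constraint $\sum_i m_i x_i=n$, giving $m_1=(n-\sum_{j\geq 2}m_j x_j)/x_1$.

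The only delicate point in the argument is that the multiplicity of $1$ in $\phi(\lambda)$ blends the $m_1$ copies contributed by $y_1$-parts with the $D$ padding copies, so $m_1$ cannot be recovered from the multiplicity of $1$ in isolation; this apparent ambiguity is precisely what makes it essential that preimages are required to sum to $n$, since the global constraint back-solves for $m_1$. Beyond this bookkeeping issue, the proof is entirely elementary, and no serious obstacle arises.
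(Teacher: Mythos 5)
Your proof is correct: the injection that replaces each part $x_i$ by $y_i$ and pads with $D=\sum_i m_i(x_i-y_i)$ copies of $1=y_1$ is well defined, lands on a partition of $n$ with parts in $T$, and is injective because the multiplicities $m_j$ for $j\geq 2$ are read off directly while $m_1$ is forced by $\sum_i m_i x_i=n$. The paper states this result as a cited theorem of Andrews without proof, and your argument is the standard one, so there is nothing to compare beyond noting that it is sound.
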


For fixed $d\geq 1$, define $r$ to be the greatest integer such that 
\begin{equation}\label{Def: r}
    2^r-1\leq d.
\end{equation}
Further define for integers $d,s \geq 1$
\begin{equation}\label{T-def}
    T_{s, d} := \{y \in \N \suchthat y \equiv 1, d+2,\dots, d+2^{s-1}\!\!\!\!\!\pmod{2d} \}.
\end{equation}

\begin{lemma}\label{hand-waving-made-formal}
Let $d\geq 1$.  If $1\leq a\leq b\leq r$, where $r$ is defined as in \eqref{Def: r}, then $\rho(T_{a,d};n) \leq \rho(T_{b,d};n)$.
\end{lemma}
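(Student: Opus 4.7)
The plan is to reduce the lemma to the elementary observation that $T_{a,d} \subseteq T_{b,d}$ whenever $1 \leq a \leq b \leq r$, after which set inclusion alone forces the desired inequality on $\rho$. The role of the bound $b \leq r$ is to guarantee that the $s$ residues listed in the definition of $T_{s,d}$ are genuinely distinct modulo $2d$.

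First, I would unpack $T_{s,d}$ as the union of the residue classes $1, d+2, d+4, \ldots, d+2^{s-1}$ modulo $2d$, noting that as $s$ increases by one, exactly one new class $d + 2^{s-1}$ is appended to the list. To see that this appending actually enlarges the underlying subset of $\mathbb{N}$ rather than collapsing onto a previously listed class, I would check that for $1 \leq i \leq s-1 \leq r-1$ one has $1 < d + 2^i < 2d$. From $2^r \leq d+1$ we get $2^i \leq 2^{r-1} \leq (d+1)/2 < d$ whenever $d \geq 2$, while the case $d=1$ forces $r = 1$ and is immediate. Hence the classes $d + 2^i$ are pairwise distinct modulo $2d$, and none coincides with the residue $1$, so indeed $T_{a,d} \subseteq T_{b,d}$.

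The inclusion immediately yields $\rho(T_{a,d}; n) \leq \rho(T_{b,d}; n)$, since any partition of $n$ with parts drawn from $T_{a,d}$ is a fortiori a partition of $n$ with parts drawn from $T_{b,d}$. If one prefers a more structural argument, I would enumerate $T_{a,d} = \{x_i\}_{i=1}^{\infty}$ and $T_{b,d} = \{y_i\}_{i=1}^{\infty}$ in increasing order; both sequences begin with $1$, and $x_i \geq y_i$ since the $i$-th smallest element of a subset is at least the $i$-th smallest of the superset, after which Theorem \ref{st-thm} applies directly.

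No serious obstacle arises; the only point deserving care is the distinctness of the listed residues modulo $2d$, which is precisely the condition the hypothesis $b \leq r$ is designed to enforce.
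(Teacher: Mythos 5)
Your proof is correct, and your primary argument is genuinely more elementary than the paper's. The paper proves this lemma by listing the elements of $T_{s,d}$ in increasing order (using $s\leq r$ to get $2^{s-1}-1<d$, hence the interleaving $(2k-1)d+2^{s-1}<2kd+1$), comparing the $i^{\text{th}}$ smallest elements of $T_{a,d}$ and $T_{b,d}$ termwise, and invoking Andrews' Theorem \ref{st-thm}. You instead observe that $T_{a,d}\subseteq T_{b,d}$, since the residue list defining $T_{a,d}$ is an initial sublist of the one defining $T_{b,d}$, and the inequality on $\rho$ follows at once because the identity map injects partitions with parts in $T_{a,d}$ into partitions with parts in $T_{b,d}$. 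One small remark: the distinctness of the residues $d+2^i$ modulo $2d$, which you carefully verify and attribute as the purpose of the hypothesis $b\leq r$, is actually irrelevant to the subset inclusion --- a union over a sublist of residue classes is contained in the union over the full list whether or not classes coincide --- so your main argument needs neither that check nor the hypothesis $b\leq r$ at all. What the paper's longer route buys is the explicit increasing enumeration of $T_{s,d}$ (its Table 1), which is reused later (e.g.\ to write $y_{i+10}=y_i+4d$ in the proof of Lemma \ref{lem: xi-yi-nonnegative}); as a proof of this lemma in isolation, your set-inclusion argument is cleaner.
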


\begin{proof}

When $s \leq r$, we have $2^{s-1}-1 < d$ which implies that $(2k-1)d+2^{s-1} < 2kd+1$ for all $k\geq 1$.  Thus Table \ref{Ts-table} below shows the elements of $T_{s,d}$ listed in increasing order when read left to right.

\renewcommand{\arraystretch}{1.2}
\begin{table}[ht]
\caption{Elements of $T_{s,d}$ in increasing order by rows for $s\leq r$.}\label{Ts-table}
    \begin{tabular}{|c|c|c|c|}
        \hline
         1&$d+2$&$\cdots$&$d+2^{s-1}$ \\
        \hline
         $2d+1$&$3d+2$&$\cdots$&$3d+2^{s-1}$ \\ 
        \hline
        \vdots&\vdots&\vdots&\vdots\\
        \hline
        $(2j-2)d+1$&$(2j-1)d+2$&$\cdots$&$(2j-1)d+2^{s-1}$ \\
        \hline
        \vdots&\vdots&\vdots&\vdots\\
        \hline
    \end{tabular}
\end{table}

Let $y_i^s$ denote the $i^{\text{th}}$ smallest element of $T_{s,d}$.  Observe that when $1\leq a\leq b\leq r$ we must have that $y_i^a \geq y_i^b$ for all $i$, since the number of columns in Table \ref{Ts-table}, and thus the index of the elements in the first column, is weakly increasing when $s=a$ is replaced by $s=b$.  Thus, by Theorem \ref{st-thm}, we conclude that $\rho(T_{a, d};n) \leq \rho(T_{b,d};n)$. 
\end{proof}

Previous work of Andrews \cite{Andrews} and Yee \cite{Yee} on Alder's conjecture gives the following lower bound for $q_d^{(1)}(n)$ for sufficiently large $d$ and $n$. 

\begin{lemma}[Andrews \cite{Andrews}, Yee \cite{Yee}] \label{lem: qd-greater-than-t5d}
Let $d\geq 63$ and $n\geq 5d$. Then $q_d^{(1)}(n)\geq \rho(T_{5,d};n)$.
\end{lemma}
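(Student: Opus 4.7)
The plan is to extract this bound from the generating-function machinery Andrews used in his 1971 treatment of Alder's conjecture for $d = 2^k-1$ and Yee employed in her extension to $d\geq 32$. The key point is that $d \geq 63$ forces $r \geq 6$ in the definition \eqref{Def: r}, since $2^6 - 1 = 63 \leq d$, so the value $s = 5$ lies strictly in the range $1 \leq s \leq r$ where the comparison Lemma \ref{hand-waving-made-formal} is available.

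First I would verify that the Andrews--Yee argument yields the intermediate inequality $q_d^{(1)}(n) \geq \rho(T_{r,d}; n)$ whenever $d \geq 63$ and $n \geq 5d$. Andrews's approach (for $d$ of the form $2^k-1$) runs through his Theorem \ref{st-thm}: he produces a strictly increasing sequence $S = \{x_i\}$ whose partition count is dominated by $q_d^{(1)}(n)$, and compares it coordinate-wise with the enumeration of $T_{r,d}$ read off from Table \ref{Ts-table}; the hypothesis $n \geq 5d$ guarantees that enough rows of that table are available to make the coordinate-wise inequality $y_i \leq x_i$ exhaustive. Yee's refinement adapts this scheme to arbitrary $d$ in the relevant range by a more delicate rearrangement on the $d$-distinct side, again producing a bound of the form $q_d^{(1)}(n) \geq \rho(T_{r,d}; n)$ for $n \geq 5d$.

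With that intermediate bound established, the final step is an appeal to Lemma \ref{hand-waving-made-formal} with $a = 5$ and $b = r$: since $5 \leq r$, the lemma furnishes $\rho(T_{5,d}; n) \leq \rho(T_{r,d}; n)$, and chaining the two inequalities gives the desired $q_d^{(1)}(n) \geq \rho(T_{5,d}; n)$.

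The main obstacle is the translation step. Andrews's and Yee's bounds are packaged in terms of explicit residue classes modulo $d+3$ or modulo $2d$ and specific powers of $2$, rather than in the uniform $T_{s,d}$ notation used here, so one must carefully identify the sequence their construction dominates and check that it contains (or is termwise at least) the sequence enumerating $T_{r,d}$, and that the threshold really is $n \geq 5d$ and not something weaker. Once this bookkeeping is done, both steps above are immediate.
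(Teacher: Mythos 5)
Your overall skeleton matches the paper's: pull an intermediate bound $q_d^{(1)}(n)\geq\rho(T_{s,d};n)$ for some $s$ near $r$ out of Andrews and Yee, then apply Lemma \ref{hand-waving-made-formal} (with $5\leq s\leq r$, valid since $d\geq 63$ forces $r\geq 6$) to step down to $T_{5,d}$. But the one step you defer --- the ``translation step'' --- is exactly where the content of the proof lives, and your guess at what it yields is not quite right. The sources do not give $q_d^{(1)}(n)\geq\rho(T_{r,d};n)$ uniformly. One must split into two cases. For $d>2^r-1$, Yee's Lemmas 2.2 and 2.7 give $q_d^{(1)}(n)\geq\mathcal{G}_d^{(1)}(n)$ for $n\geq 4d+2^r$ (which $n\geq 5d$ implies in this case), where $\mathcal{G}_d^{(1)}(n)$ counts partitions into \emph{distinct} parts $\equiv d+2^{r-1}\pmod{2d}$ together with unrestricted parts from $T_{r-1,d}$; discarding the distinct-parts factor of the generating function yields only $q_d^{(1)}(n)\geq\rho(T_{r-1,d};n)$, not $\rho(T_{r,d};n)$. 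This is harmless for the conclusion because $r-1\geq 5$, but it means the intermediate inequality you set out to verify is false as stated (or at least not what the citation delivers). For $d=2^r-1$, Yee's bound is not the one used; instead Andrews's Theorem 1 gives $q_d^{(1)}(n)\geq\mathcal{L}_d(n)=\rho(T_{r,d};n)$ directly.

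A second, smaller point: the mechanism is not a coordinate-wise application of Theorem \ref{st-thm} where ``$n\geq 5d$ guarantees enough rows of the table.'' Both intermediate bounds are generating-function inequalities read off directly from product formulas; the role of $n\geq 5d$ is solely to satisfy Yee's hypothesis $n\geq 4d+2^r$ in the non-Mersenne case. With the case split and the $T_{r-1,d}$ correction in place, your final appeal to Lemma \ref{hand-waving-made-formal} closes the argument exactly as in the paper.
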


\begin{proof} 
Recall for fixed $d\geq 1$, $r$ is defined as in \eqref{Def: r}.  When $d > 2^r-1$ for $r\geq 5$ and $n\geq 4d + 2^r$, work of Yee [\cite{Yee}, Lemmas 2.2 and 2.7] gives that
\begin{align*}
q_d^{(1)}(n)&\geq \mathcal{G}_d^{(1)}(n),
\end{align*}
where
\begin{align*}
\sum_{k\geq 0}\mathcal{G}_d^{(1)}(n)q^n=\frac{(-q^{d+2^{r-1}};q^{2d})_\infty}{(q;q^{2d})_\infty(q^{d+2};q^{2d})_\infty\cdots(d^{d+2^{r-2}};q^{2d})_\infty}.
\end{align*}
From this generating function it follows that $\mathcal{G}_d^{(1)}(n)$ counts the number of partitions of $n$ into distinct parts congruent to $d+2^{r-1}$ modulo $2d$ and unrestricted parts from the set $T_{r-1, d}$ as defined in \eqref{T-def}. Thus it follows that
\begin{align*}
    q_d^{(1)}(n)&\geq \mathcal{G}_d^{(1)}(n)\geq \rho(T_{r-1,d};n).
\end{align*}
From our hypotheses $d\geq 63$, so $r\geq 6$.  Hence by Lemma \ref{hand-waving-made-formal}, we have when $d > 2^r-1$ that
\begin{align*}
    q_d^{(1)}(n)&\geq \rho(T_{5,d};n),
\end{align*}
as desired.

When $d=2^r-1$ for $r\geq 4$, work of Andrews [\cite{Andrews}, Theorem 1 and discussion], gives that 
\begin{align*}
    q_d^{(1)}(n)&\geq \mathcal{L}_d(n),
\end{align*}
where
\[
\sum_{n\geq 0}\mathcal{L}_d(n)q^n =\frac{1}{(q;q^{2d})_\infty(d^{d+2};q^{2d})_\infty\cdots(q^{d+2^{r-1}};q^{2d})_\infty}.
\]
From this generating function it follows that $\mathcal{L}_d(n) = \rho(T_{r,d};n)$.  Thus with our hypotheses, and Lemma \ref{hand-waving-made-formal}, it follows that when $d = 2^r-1$,
\[
    q_d^{(1)}(n)\geq \rho(T_{5,d};n),
\]
as desired.
\end{proof}

Let
\[
S_d^N := \{x \in \mathbb{N} \mid x \equiv \pm1 \!\!\!\!\! \pmod{d-N+3}\} \setminus \{d-N+2\},
\]
so that we have by definition
\begin{equation}\label{QtoS}
Q_{d-N}^{(1,-)}(n) = \rho(S_d^N; n).
\end{equation}
We write $x_i^N$ and $y_i$ to denote the $i^{\text{th}}$ smallest elements of $S_{d}^N$ and $T_{5,d}$, respectively. 

If $x_i^N \geq y_i$ for all $i$, then Theorem \ref{biglemon} would follow easily from Theorem \ref{st-thm} and Lemma \ref{lem: qd-greater-than-t5d}.  While this is not the case, the inequality does hold for all but the index $i=2$, as shown in the following lemma.

\begin{lemma} \label{lem: xi-yi-nonnegative}
If $N \geq 2$ and $d \geq \max \{ 31, 6N-17 \}$, then $x_i^N - y_i \geq 0$ for all $i \geq 3$.  Moreover, we have that 
\[
\min_{i \geq 3} \{x_i^N - y_i \} = \min \{d-2N-1, d-6N+17 \}.
\]
\end{lemma}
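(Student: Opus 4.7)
The plan is to write both sequences explicitly, use a period-$10$ identity to reduce to a finite check at ten indices, and then conclude by short linear comparisons. Setting $m := d - N + 3$, the elements of $S_d^N$ in increasing order are $1, m+1, 2m-1, 2m+1, 3m-1, 3m+1, \ldots$, so $x_1^N = 1$ and $x_i^N = \lceil i/2 \rceil m + (-1)^i$ for $i \geq 2$. Reading Table~\ref{Ts-table} with $s = 5$, the $j$-th row gives $y_{5j-4} = (2j-2)d+1$ and $y_{5j-4+c} = (2j-1)d + 2^c$ for $c = 1, 2, 3, 4$.

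The key observation is a periodicity identity: for every $i \geq 2$,
\[
x_{i+10}^N - x_i^N = 5m \quad\text{and}\quad y_{i+10} - y_i = 4d,
\]
so $(x_{i+10}^N - y_{i+10}) - (x_i^N - y_i) = 5m - 4d = d - 5N + 15$. Since $d \geq \max\{31, 6N - 17\}$ forces $d \geq 5N - 15$ for every $N \geq 2$, the sequence $x_i^N - y_i$ is non-decreasing along every arithmetic progression of step $10$ starting at $i \geq 2$, and therefore $\min_{i \geq 3}(x_i^N - y_i) = \min_{3 \leq i \leq 12}(x_i^N - y_i)$.

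Finally, I would compute the ten values $x_i^N - y_i$ for $i = 3, \ldots, 12$ directly by substitution, obtaining the linear forms
\[
d-2N+1,\ d-2N-1,\ 2d-3N-8,\ d-3N+9,\ d-4N+9,\ d-4N+9,\ 2d-5N+6,\ 2d-5N,\ 2d-6N+16,\ d-6N+17.
\]
Pairwise comparisons then show each is at least $\min\{d-2N-1,\, d-6N+17\}$, with equality attained at $i=4$ and $i=12$; the individual inequalities reduce to bounds such as $d \geq N+7$, $d \geq 3N-7$, or $N \geq 4$, each of which follows from the hypothesis (using $d \geq 31$ when $N$ is small and $d \geq 6N-17$ when $N$ is large). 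The main obstacle is the parity bookkeeping in the formula for $x_i^N$: the alternation between $\lceil i/2 \rceil m - 1$ and $\lceil i/2 \rceil m + 1$ makes it easy to commit an off-by-one error in one of the ten initial diffs, but once those are computed correctly the remaining checks are routine linear inequalities in $d$ and $N$.
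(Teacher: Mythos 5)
Your proposal is correct and follows essentially the same route as the paper: the explicit formulas $x_i^N = \lceil i/2\rceil(d-N+3)+(-1)^i$ and the row structure of $T_{5,d}$ give the period-$10$ shift $(x_{i+10}^N - y_{i+10}) - (x_i^N - y_i) = d-5N+15 \geq 0$, reducing everything to the ten differences for $3 \leq i \leq 12$, which you list identically to the paper and from which the stated minimum follows.
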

        
\begin{proof}
Fix $d\geq 1$.  We first show that we can reduce the indices modulo $10$ in our comparison.  By definition of $S_d^N$, we see that for $i\geq 3$, $x_i^N = \lceil \frac{i}{2} \rceil (d-N+3) + (-1)^i$, so it follows that $x_{i+10}^N = x_i^N + 5d - 5N + 15$.  Since $d\geq 31$ we have that $r\geq 5$.  Thus recalling Table \ref{Ts-table}, we can write $y_{i+10} = y_i + 4d$ for all $i\geq 1$.  Thus for $i\geq 3$, we have 
\begin{equation}\label{mod10} 
x_{i+10}^N - y_{i+10} = (x_i^N - y_i) + (d - 5N + 15) \geq x_i^N -y_i, 
\end{equation}
since $d \geq \max \{31, 6N-17 \} \geq 5N - 15$ when $N\geq 2$.

Thus, it suffices to show $x_i^N - y_i \geq 0$ for the indices $3 \leq i \leq 12$. By direct computation, we see
\begin{align*}
    &x_3^N-y_3=d-2N+1,\\
    &x_4^N-y_4=d-2N-1,\\
    &x_5^N-y_5=2d-3N-8,\\
    &x_6^N-y_6=d-3N+9,\\
    &x_7^N-y_7=d-4N+9,\\
    &x_8^N-y_8=d-4N+9,\\
    &x_9^N-y_9=2d-5N+6,\\
    &x_{10}^N-y_{10}=2d-5N,\\
    &x_{11}^N-y_{11}=2d-6N+16,\\
    &x_{12}^N-y_{12}=d-6N+17,
\end{align*}
so that $x_i^N - y_i \geq 0$ when
\[ d \geq \max\{31, 5N - 15, 2N - 1, 2N + 1, \frac{3N+8}{2}, 3N-9, \dots, 3N - 8, 6N - 17 \}. \]
Among these terms, $31$ is maximal when $N \leq 8$ and $6N - 17$ is maximal for $N \geq 8$, so that $x_i^N - y_i \geq 0$ for $d \geq \max \{ 31, 6N - 17 \}$.  Moreover from \eqref{mod10} we have that
\[
\min_{i \geq 3} \{ x_i^N - y_i \} = \min_{3 \leq i\leq 12} \{ x_i^N - y_i \}.
\]
By direct computation we see that among the terms $x_i^N - y_i$ for $3\leq i \leq 12$ listed above, $d-2N-1$ is minimal when $N \leq 4$ and $d-6N+17$ is minimal when $N \geq 5$. Thus
\[ 
\min_{i \geq 3} \{ x_i^N - y_i \} =
\begin{cases}
d-2N-1 & N \leq 4 \\
d-6N+17 & N \geq 5. \\
\end{cases}
\]
\end{proof}

For fixed $d,n\geq 1$, write $S^N$ to denote the set of partitions of $n$ with parts in $S_d^N$ so that $|S^N| = \rho(S_d^N; n)$.  For $\lambda\in S^N$, let $p_i$ denote the number of times $x_i^N$ occurs as a part in $\lambda$, and define
\begin{equation}\label{alpha}
\alpha = \alpha(\lambda) :=\sum_{i \geq 3}(x_i^N-y_i)p_i. 
\end{equation}

The following lemma gives a lower bound on the number of parts equal to $x_2^N=d-N+4$ for certain partitions $\lambda\in S^N$.  It is imperative to our proof of Theorem \ref{biglemon}.

\begin{lemma} \label{lem: p2-geq-8}
Let $N \geq 2$, $d \geq \max \{ 31, 9N-13, 13N-31 \}$, $n \geq 7d+14$, and $\lambda \in S^N$ such that $p_1+\alpha < (N-2)p_2$. Then $p_2 \geq 8$.

\begin{proof}
Suppose $p_2 \leq 7$.  We first observe that if $\alpha \neq 0$, then there exists some $i\geq 3$ such that $p_i\neq 0$.  By Lemma \ref{lem: xi-yi-nonnegative} and our bounds on $d$ it follows that 
\[
\alpha \geq \min \{d-2N-1, d-6N+17 \} \geq 7N - 14.
\]
But then 
\[
p_1+\alpha \geq 7N - 14 \geq (N-2)p_2,
\] 
which contradicts our hypothesis on $p_1$. 

However, if $\alpha =0$, then $p_i=0$ for all $i\geq 3$, and $p_1 < 7N-14$, so 
\[
n=p_1+p_2(d-N+4) < (7N-14) + 7(d-N+4)=7d+14,
\]
which contradicts our hypothesis on $n$.  Thus we must have $p_2 \geq 8$ as desired.
\end{proof}

\end{lemma}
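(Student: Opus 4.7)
The plan is to argue by contrapositive: assume $p_2 \leq 7$ and derive $p_1 + \alpha \geq (N-2)p_2$, which directly contradicts the hypothesis. I would split into two cases depending on whether $\alpha > 0$ or $\alpha = 0$, since the definition \eqref{alpha} of $\alpha$ ties its vanishing to the absence of parts $x_i^N$ with $i \geq 3$.

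In the case $\alpha > 0$, at least one $p_i$ with $i \geq 3$ must be nonzero, so from \eqref{alpha} we get the trivial lower bound $\alpha \geq \min_{i \geq 3}(x_i^N - y_i)$. Lemma \ref{lem: xi-yi-nonnegative} identifies this minimum as $\min\{d - 2N - 1,\, d - 6N + 17\}$. Here the numerical hypotheses $d \geq 9N - 13$ and $d \geq 13N - 31$ enter: they are exactly the thresholds that force $d - 2N - 1 \geq 7N - 14$ and $d - 6N + 17 \geq 7N - 14$, respectively. Combined with $p_2 \leq 7$, this yields $\alpha \geq 7(N-2) \geq (N-2)p_2$, and hence $p_1 + \alpha \geq (N-2)p_2$, contradicting the hypothesis. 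In the remaining case $\alpha = 0$, every $p_i$ with $i \geq 3$ vanishes, so the partition $\lambda$ uses only $x_1^N = 1$ and $x_2^N = d - N + 4$. Then $n = p_1 + p_2(d - N + 4)$, while the hypothesis $p_1 + \alpha < (N-2)p_2$ together with $p_2 \leq 7$ gives $p_1 < 7(N-2) = 7N - 14$. Adding these yields $n < (7N - 14) + 7(d - N + 4) = 7d + 14$, contradicting $n \geq 7d + 14$.

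There is no real obstacle here once the case split is chosen: the proof is bookkeeping. The delicate point is confirming that the specific constants $9N-13$ and $13N-31$ in the hypothesis on $d$ are the correct thresholds for making $\min\{d-2N-1, d-6N+17\} \geq 7(N-2)$, and noting that the cutoff $7$ (and hence the conclusion $p_2 \geq 8$) is calibrated precisely so that the two cases $\alpha > 0$ and $\alpha = 0$ can be closed by the same numerical bound $7(N-2)$, which is in turn the quantity that will be needed when this lemma is fed into the proof of Theorem \ref{biglemon}.
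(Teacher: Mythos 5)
Your proposal is correct and follows essentially the same argument as the paper: split on $\alpha>0$ versus $\alpha=0$, use Lemma \ref{lem: xi-yi-nonnegative} to bound $\alpha\geq\min\{d-2N-1,\,d-6N+17\}\geq 7N-14$ in the first case, and derive $n<7d+14$ in the second. Your identification of $d\geq 9N-13$ and $d\geq 13N-31$ as exactly the thresholds making these minima at least $7(N-2)$ matches the paper's use of the hypothesis on $d$.
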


We conclude this section with a few results that will be used in Section \ref{conclusion}.  The first two are lemmas from work of Duncan et al. \cite{DKST} which give key inequalities in our proof of Theorem \ref{thm:gen_kp}.

\begin{lemma}[Duncan et al. \cite{DKST}, 2021]\label{Lem: st-thm with ceiling}
Let $a,d\geq 1$, and let $n\geq d+2a$. Then
\[q_d^{(a)}(n)\geq q_{\left\lceil\frac{d}{a}\right\rceil}^{(1)}\left(\left\lceil\frac{n}{a}\right\rceil\right).\]
\end{lemma}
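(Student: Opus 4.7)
The plan is to exhibit an explicit injection $\phi\colon q_{D}^{(1)}(m) \hookrightarrow q_d^{(a)}(n)$, where $D := \lceil d/a \rceil$ and $m := \lceil n/a \rceil$. Writing $r := am - n \in \{0, 1, \ldots, a-1\}$, the guiding observation is that the scaled sequence $a\mu := (a\mu_1, \ldots, a\mu_k)$ has parts $\geq a$ and gaps $a(\mu_i - \mu_{i+1}) \geq aD \geq d$, hence lies in $q_d^{(a)}(am)$; one must shave off exactly $r$ from $a\mu$ in a case-dependent way to land in $q_d^{(a)}(n)$.

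If $r = 0$, take $\phi(\mu) := a\mu$. If $r \geq 1$ and $\mu_k \geq 2$, subtract $r$ from the smallest part: $\phi(\mu) := (a\mu_1, \ldots, a\mu_{k-1}, a\mu_k - r)$; here $a\mu_k - r \geq 2a - (a-1) = a+1$, and the critical gap $\lambda_{k-1} - \lambda_k$ actually grows by $r$. If $r \geq 1$ and $\mu_k = 1$ with $k \geq 3$, drop the trailing $1$ (losing $a$ from the sum) and inflate the largest part by $a - r$: $\phi(\mu) := (a\mu_1 + (a - r), a\mu_2, \ldots, a\mu_{k-1})$. The top gap becomes $a(\mu_1 - \mu_2) + (a-r) \geq aD + 1 \geq d+1$, and other gaps are unchanged. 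Finally, if $r \geq 1$ and $\mu = (\mu_1, 1)$, set $\phi(\mu) := (a\mu_1 - r, a)$; the gap $a(\mu_1 - 1) - r \geq d$ is \emph{equivalent} to $n \geq d + 2a$ (using $n = a\mu_1 + a - r$), and this is precisely where the hypothesis is consumed.

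For injectivity, the $r = 0$ map is just scaling by $a$, clearly reversible. For $r \geq 1$, each output $\phi(\mu)$ has exactly one part congruent to $-r \pmod{a}$ with all others divisible by $a$; the position of that distinguished part (last if $\mu_k \geq 2$, first otherwise) separates the $\mu_k \geq 2$ branch from the $\mu_k = 1$ branches. To distinguish the two $\mu_k = 1$ subcases I would inspect $\lambda_2$: it equals $a$ exactly in the $k = 2$ construction but is $\geq 2a$ in the $k \geq 3$ construction (since there $\mu_2 \geq 1 + D \geq 2$). From these markers one reads off the originating case and recovers $\mu$ by inverting the scaling and restoring the dropped trailing $1$ when applicable.

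The main obstacle is designing the case split so that outputs of different branches cannot collide; a naive variant (always subtract $r$ from the smallest part, pushing the adjustment one slot up when $\mu_k = 1$) collides with the $\mu_k \geq 2$ branch on inputs of shape $(\mu_1, 1)$, forcing the slightly different constructions above. Once the right variants are pinned down, verifying validity in each case reduces to the routine checks of sum, part size, and gap sketched here, and the role of the hypothesis $n \geq d + 2a$ is isolated to a single inequality in the $k = 2$ boundary case.
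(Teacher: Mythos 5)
Your construction is correct: the dilation $\mu\mapsto a\mu$ combined with the case-dependent removal of the deficit $r=a\lceil n/a\rceil-n$ is exactly the right mechanism, each branch preserves the part-size and gap conditions, and your markers (the unique part $\not\equiv 0\pmod a$, its position, the part count, and whether $\lambda_2=a$) do make the piecewise map invertible; the hypothesis $n\geq d+2a$ is indeed consumed only in the $\mu=(\mu_1,1)$ boundary case. Note that this paper does not prove the lemma but imports it from Duncan et al., whose argument is the same dilate-and-adjust injection, so your proof is a faithful, self-contained reconstruction rather than a new route. (The only unlisted case, $\mu=(1)$, cannot occur since $\lceil n/a\rceil\geq \lceil d/a\rceil+2\geq 3$.)
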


\begin{lemma}[Duncan et al. \cite{DKST}, 2021]\label{a-to-1}
Let $a,d,n\geq 1$ be such that $a\mid (d+3)$. Then
\begin{align*}
    &Q_d^{(a,-)}(an)=Q_{\frac{d+3}{a}-3}^{(1,-)}(n).
\end{align*}
\end{lemma}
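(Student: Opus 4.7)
The plan is to prove Lemma \ref{a-to-1} by constructing an explicit bijection between the partitions counted by each side, obtained simply by scaling all parts by $a$. Writing $m := (d+3)/a$, so that $d+3 = am$ and $d+3-a = a(m-1)$, the lemma rewrites as $Q_{am-3}^{(a,-)}(an) = Q_{m-3}^{(1,-)}(n)$. This is plausible because being $\equiv \pm a \pmod{am}$ is the ``$a$-fold dilation'' of being $\equiv \pm 1 \pmod{m}$.

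First, I would observe that a positive integer $\lambda_i$ satisfies $\lambda_i \equiv \pm a \pmod{am}$ if and only if $\lambda_i = a(km \pm 1)$ for some integer $k \geq 0$ (with $k \geq 1$ in the minus case, since parts are positive). In particular every such part is divisible by $a$, and writing $\lambda_i = a\mu_i$ we see that $\mu_i$ is a positive integer with $\mu_i \equiv \pm 1 \pmod{m}$. Moreover, the excluded part $d+3-a = a(m-1)$ on the left corresponds under this division to $\mu_i = m-1$, the excluded part on the right.

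Next, I would define the map $\Phi$ sending a partition $\lambda = (\lambda_1, \lambda_2, \ldots)$ counted by $Q_{am-3}^{(a,-)}(an)$ to $\Phi(\lambda) = (\lambda_1/a, \lambda_2/a, \ldots)$, which is a weakly decreasing sequence of positive integers summing to $an/a = n$, with every part $\equiv \pm 1 \pmod{m}$ and no part equal to $m-1$. Thus $\Phi(\lambda)$ is counted by $Q_{m-3}^{(1,-)}(n)$. The inverse $\Phi^{-1}$ takes a partition $\mu$ counted by $Q_{m-3}^{(1,-)}(n)$ and multiplies each part by $a$; the verification that the image lies in the set counted by $Q_{am-3}^{(a,-)}(an)$ is again immediate from the congruence and exclusion analysis above. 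Since $\Phi$ and $\Phi^{-1}$ are mutually inverse, the two partition counts are equal.

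There is no real obstacle here: the content is entirely the elementary observation that dilating by $a$ intertwines the residue classes $\pm 1 \pmod{m}$ with the residue classes $\pm a \pmod{am}$, and intertwines the excluded parts correspondingly. The only thing to be careful about is to verify that divisibility of every part by $a$ is automatic (not an extra assumption), which is why the hypothesis $a \mid (d+3)$ is essential: without it, the residue classes $\pm a \pmod{d+3}$ would not consist entirely of multiples of $a$, and the bijection would break down.
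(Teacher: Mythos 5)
Your dilation bijection $\lambda \mapsto \lambda/a$ is correct and complete: since $a \mid (d+3)$, every part $\equiv \pm a \pmod{d+3}$ is automatically a multiple of $a$, the quotients are exactly the parts $\equiv \pm 1 \pmod{\frac{d+3}{a}}$, and the excluded part $d+3-a$ corresponds precisely to the excluded part $\frac{d+3}{a}-1$ on the other side. The paper states Lemma \ref{a-to-1} without proof, citing Duncan et al.\ \cite{DKST}; your scaling argument is the standard one for this identity and fills that role correctly.
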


Inagaki and Tamura \cite{Inagaki-Tamura} expanded Theorem \ref{st-thm} to allow for partitions of different integers, which enables us to prove another key inequality in our proof of Theorem \ref{thm:gen_kp}.

\begin{lemma}[Inagaki and Tamura \cite{Inagaki-Tamura}]\label{IT-modiefied-ST}
Let $a\geq 1$, and let $S=\{x_i\}_{i=1}^\infty$ and $T=\{y_i\}_{i=1}^\infty$ be two strictly increasing sequences of positive integers such that $y_1=a$ and $a\mid y_i$, $x_i\geq y_i$ for all $i\geq 1$. Then for all $n\geq 1$,
\[\rho(T;n+\hat{n}_a)\geq \rho(S;n),\]
where $\hat{n}_a$ denotes the least nonnegative integer such that $a\mid (n+\hat{n}_a)$.
\end{lemma}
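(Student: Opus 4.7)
The plan is to prove Lemma~\ref{IT-modiefied-ST} by constructing an explicit injection from partitions of $n$ with parts in $S$ into partitions of $n + \hat{n}_a$ with parts in $T$, generalizing the proof of Andrews' Theorem~\ref{st-thm}. In Andrews' theorem the part $y_1 = 1$ plays the dual role of being a legitimate part and of being the ``padding unit'' used to compensate for the decreases $x_i - y_i$ incurred when parts are replaced. Here the role of $1$ is taken up by $y_1 = a$, and the divisibility hypothesis $a \mid y_i$ is precisely what is needed to make the arithmetic of that padding work out.

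Concretely, given a partition $\lambda$ counted by $\rho(S; n)$, let $m_i$ denote the multiplicity of the value $x_i$ in $\lambda$, so that $\sum_i m_i x_i = n$. Define $\phi(\lambda)$ to be the partition consisting of $m_i$ copies of $y_i$ for each $i \geq 1$, together with an additional $k$ copies of $y_1 = a$, where
\[
k := \frac{(n + \hat{n}_a) - \sum_i m_i y_i}{a}.
\]
First I would check that $\phi$ is well-defined. The inequalities $y_i \leq x_i$ give $\sum_i m_i y_i \leq n \leq n + \hat{n}_a$, so the numerator above is nonnegative; meanwhile $a \mid y_i$ for every $i$ and $a \mid (n + \hat{n}_a)$ by the definition of $\hat{n}_a$, so the numerator is a nonnegative multiple of $a$. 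Hence $k \in \mathbb{Z}_{\geq 0}$, and by construction $\phi(\lambda)$ is a partition of $n + \hat{n}_a$ whose parts all lie in $T$.

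The remaining step is injectivity of $\phi$. Because the $y_i$ are strictly increasing, the multiplicity of $y_i$ in $\phi(\lambda)$ equals $m_i$ for every $i \geq 2$, so these multiplicities can be read off directly from the image. The remaining multiplicity $m_1$ is then uniquely determined by the constraint $m_1 x_1 = n - \sum_{i \geq 2} m_i x_i$, which reconstructs $\lambda$. The desired inequality $\rho(T; n + \hat{n}_a) \geq \rho(S; n)$ follows. The only subtle point, rather than a real obstacle, is recognizing that the hypothesis $a \mid y_i$ is exactly what converts the integrality of $k$ into an immediate consequence of the definition of $\hat{n}_a$; everything else is routine bookkeeping of multiplicities.
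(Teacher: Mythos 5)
Your proof is correct. The paper itself gives no proof of this lemma (it is quoted from Inagaki--Tamura), but your padding-injection argument --- replace each $x_i$ by $y_i$ and absorb the deficit $(n+\hat{n}_a)-\sum_i m_i y_i$, which is a nonnegative multiple of $a$ precisely because $a\mid y_i$ and $a\mid(n+\hat{n}_a)$, into extra copies of $y_1=a$ --- is exactly the standard injective proof of Theorem~\ref{st-thm} adapted to this setting, and the injectivity step (recover $m_i$ for $i\geq 2$ from the image, then $m_1$ from $\sum_i m_i x_i=n$) is sound.
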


\section{Proof of Theorem \ref{biglemon}}\label{biglemmaproof}

In this section, we modify the work of Inagaki and Tamura \cite{Inagaki-Tamura} and use results from Andrews \cite{Andrews} and Yee \cite{Yee} to prove Theorem \ref{biglemon}.   As our primary method works only when $n\geq 7d+14$, we first consider the case when $d+2 \leq n \leq 7d+13$ below.

\begin{lemma}\label{bigsmalllemon}
Let $N \geq 2$ and $d \geq \max\{63,46N-79 \}$.  Then for all $d+2 \leq n \leq 7d+13$,
\[
q_d^{(1)}(n) \geq Q_{d-N}^{(1,-)}(n). 
\]
\end{lemma}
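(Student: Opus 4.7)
The plan is to construct an explicit injection from the partitions counted by $Q_{d-N}^{(1,-)}(n) = \rho(S_d^N; n)$ into those counted by $q_d^{(1)}(n)$, exploiting the strong structural constraints on $S^N$-partitions when $n \leq 7d+13$. Since Lemma \ref{lem: qd-greater-than-t5d} requires $n \geq 5d$, the natural strategy of passing through $\rho(T_{5,d};n)$ cannot be used uniformly across the whole range, so a more hands-on argument is needed.

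I would first establish that any $\lambda \in S^N$ of weight $n \leq 7d+13$ has at most seven parts from $S_d^N \setminus \{1\}$, using that each such part is at least $x_2^N = d-N+4$ and that $8(d-N+4) > 7d+13$ under the hypothesis $d \geq 46N-79$ (the relevant consequence is $d > 8N - 19$). Moreover, no part $x_i^N$ with $i \geq 15$ can occur, since $x_{15}^N = 8(d-N+3)-1 > 7d+13$ in our range. Hence $\lambda$ is determined by its large-part profile $(p_2, \dots, p_{14})$ with $\sum_{i \geq 2} p_i \leq 7$, together with the remaining $p_1$ ones.

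Next, for each admissible profile I would construct a distinct $d$-distinct partition $\mu$ of $n$. The idea is to aggregate the large parts of $\lambda$ into one dominant part $\mu_1$, which absorbs any excess from the profile as well as a chunk of the $p_1$ ones, and then pad with a short tail $\mu_2 > \mu_3 > \cdots$ chosen from a canonical $d$-distinct template indexed by the profile. A concrete realization mimics the swap underlying Lemma \ref{lem: p2-geq-8}: when $p_1 + \alpha \geq (N-2)p_2$, trade each copy of $x_2^N$ for $y_2 = d+2$ and offload the surplus $(N-2)p_2$ into removed ones, producing a partition into parts from $T_{5,d}$ which can then be read as (or injected into) a $d$-distinct partition via Andrews--Yee style manipulations. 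When this swap fails, the very small range of $p_2 \leq 7$ here (in contrast to the $p_2 \geq 8$ regime used for the main theorem) makes the exceptional profiles explicitly enumerable, and each can be sent to a bespoke $d$-distinct partition of $n$.

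The main obstacle will be verifying both the validity of $\mu$ (genuinely $d$-distinct) and the injectivity of the map across the entire profile set. The tight cases will be at the boundaries: $n$ close to $d+2$, where $q_d^{(1)}(n)$ itself has very few elements, and saturated profiles with $\sum_{i \geq 2} p_i = 7$, where $\mu_1$ may collide with $\mu_2$ or violate the $d$-gap. I expect these to force a finer subcase analysis, where injectivity is checked by exhibiting an explicit recovery map from $\mu$ back to the profile, and where the specific strength of the bound $d \geq 46N - 79$ (as opposed to the weaker $13N-31$ used in Lemma \ref{lem: p2-geq-8}) plays a role in guaranteeing enough room in these tight configurations.
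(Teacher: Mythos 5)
Your proposal is a plan rather than a proof: the injection that would carry the whole argument is never actually defined. The structural observations at the start (at most seven parts exceeding $1$, no part $x_i^N$ with $i\geq 15$) are correct, but everything after that is a placeholder --- ``a canonical $d$-distinct template indexed by the profile,'' ``a bespoke $d$-distinct partition,'' ``Andrews--Yee style manipulations'' --- with no specification of the map, no verification that the images are genuinely $d$-distinct partitions of $n$, and no injectivity argument. Worse, the one concrete mechanism you do invoke (trading copies of $x_2^N$ for $d+2$ so as to land in a $T_{5,d}$-partition and then passing to $d$-distinct partitions) is exactly the route you correctly ruled out in your first paragraph: the comparison $q_d^{(1)}(n)\geq\rho(T_{5,d};n)$ of Lemma \ref{lem: qd-greater-than-t5d} is only available for $n\geq 5d$, so it cannot be used on most of the interval $[d+2,7d+13]$. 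The boundary case $n$ near $d+2$, where $q_d^{(1)}(n)$ equals $1$ or $2$, is flagged but not handled; an injection-based argument there has essentially no slack and would have to be checked by hand anyway.

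The paper's proof is far more elementary and avoids injections entirely. Both $q_d^{(1)}(n)$ and $Q_{d-N}^{(1,-)}(n)$ are weakly increasing in $n$ (add $1$ to the largest part, respectively add a part equal to $1$), so it suffices to verify $q_d^{(1)}(k_1)\geq Q_{d-N}^{(1,-)}(k_2)$ at the endpoints of three subintervals covering $[d+2,7d+13]$. The right-hand sides are computed or bounded by direct enumeration, since only the parts $x_1^N,\dots,x_{14}^N$ are available; this gives the values $2$, $29$, and at most $110$ at the three right endpoints. The left-hand sides are bounded below by $\left\lfloor\frac{n-d}{2}\right\rfloor+1$ using only one- and two-part $d$-distinct partitions. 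If you want to salvage your approach, this monotonicity reduction is the missing idea that turns the statement into a tractable finite check; as written, your argument has a genuine gap at its core.
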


\begin{proof}
Observe that $q_d^{(1)}(n)$ and $Q_{d-N}^{(1,-)}(n)$ are both weakly increasing functions since every partition of $n$ counted by $q_d^{(1)}(n)$ or $Q_{d-N}^{(1,-)}(n)$, respectively, injects to a partition of $n+1$ counted by $q_d^{(1)}(n+1)$ or $Q_{d-N}^{(1,-)}(n+1)$, respectively by adding $1$ to the largest part or adding a part of size $1$, respectively.  Thus, if $q_d^{(1)}(k_1) \geq Q_{d-N}^{(1,-)}(k_2)$ for integers $k_1 \leq k_2$, it follows that $q_d^{(1)}(n) \geq Q_{d-N}^{(1,-)}(n)$ for all $k_1 \leq n \leq k_2$.  By our hypotheses on $d$, it follows that $d+2 \leq 2d-2N+4$, $2d-2N+5 \leq 5d-5N+16$, and $5d-5N+17 \leq 7d+13$.  Thus it suffices to prove the following three inequalities.
\begin{align}
q_d^{(1)}(d+2) & \geq Q_{d-N}^{(1,-)}(2d-2N+4), \label{case1} \\
q_d^{(1)}(2d-2N+5) & \geq Q_{d-N}^{(1,-)}(5d-5N+16), \label{case2} \\
q_d^{(1)}(5d-5N+17) & \geq Q_{d-N}^{(1,-)}(7d+13). \label{case3} 
\end{align}
 
Note that the partition $n$ itself is always counted by $q_d^{(1)}(n)$, and for any $1 \leq k \leq \left \lfloor \frac{n-d}{2} \right \rfloor$, the partition $(n-k) + k$ is counted by $q_d^{(1)}(n)$ since then $(n-k) - k \geq d$. Thus, for any $d, n \geq 1$,

\begin{equation}\label{q-lemma}
q_d^{(1)}(n) \geq \max \left\{ 1,\left\lfloor \frac{n-d}{2} \right\rfloor + 1 \right\}.
\end{equation} 

We first prove \eqref{case1}.  Observe that any partition counted by $Q_{d-N}^{(1,-)}(2d-2N+4)$ can only use the parts $x_1^N=1$ and $x_2^N=d-N+4$ since $x_3^N>2d-2N+4$.  There is exactly one such partition with largest part $x_1^N$, and one with largest part $x_2^N$.  Thus $Q_{d-N}^{(1,-)}(2d-2N+4)=2$.  Using \eqref{q-lemma} we obtain that $q_d^{(1)}(d+2) \geq 2$ which gives \eqref{case1}.

We next prove \eqref{case2}.  Since $x^N_{10}=5d-5N+16$, any partition counted by $Q_{d-N}^{(1,-)}(5d-5N+16)$ can only use the parts $x^N_i$ with $1 \leq i \leq 10$.  Using that fact that $d \geq \max\{63,46N-79 \}$, one can calculate that the number of partitions of $5d-5N+16$ with largest part $x_i^N$ as $i$ ranges from $1$ to $10$ is $1$, $4$, $5$, $6$, $5$, $3$, $2$, $1$, $1$, $1$, respectively.  Thus $Q_{d-N}^{(1,-)}(5d-5N+16) = 29$.  Since $d \geq \max\{63,46N-79 \}$, it follows that $d-2N+5 \geq 56$, and thus \eqref{q-lemma} gives that 
\[
q_d^{(1)}(2d-2N+5) \geq \left \lfloor \frac{d-2N+5}{2} \right \rfloor + 1 \geq 29,
\]
which yields \eqref{case2}.

We now prove \eqref{case3}.  Since $d \geq \max\{63,46N-79 \}$, it follows that $x^N_{15} > 7d+13$. Thus any partition counted by $Q_{d-N}^{(1,-)}(7d+13)$ can only use the parts $x^N_i$ with $1 \leq i \leq 14.$  Using that fact that $d \geq \max\{63,46N-79 \}$, one can calculate that the number of partitions of $7d+13$ with largest part $x_i^N$ as $i$ ranges from $1$ to $14$, is at most\footnote{Some variance can occur for certain choices of $d$ and $N$.} $1$, $7$, $12$, $20$, $16$, $18$, $10$, $10$, $5$, $5$, $2$, $2$, $1$, $1$, respectively.  Thus $Q_{d-N}^{(1,-)}(7d+13) \leq 110$.  Since $d \geq \max\{63,46N-79 \}$, it follows that $4d-5N+17 \geq 218$, and thus \eqref{q-lemma} gives that
\[
q_d^{(1)}(5d-5N+17) \geq \left \lfloor \frac{4d-5N+17}{2} \right \rfloor + 1 \geq 110,
\]
which yields \eqref{case3}.
\end{proof}

We now complete the proof of Theorem \ref{biglemon} with the following lemma.  

\begin{lemma}\label{bigbiglemon}
Let $N \geq 2$ and $d \geq \max\{63,46N-79\}$.  Then for all $n \geq 7d+14$, 
\[ 
q_d^{(1)}(n) \geq Q_{d-N}^{(1,-)}(n). 
\]
\end{lemma}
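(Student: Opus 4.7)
The first move is to apply Lemma~\ref{lem: qd-greater-than-t5d}: since $d \geq 63$ and $n \geq 7d + 14 \geq 5d$, that lemma gives $q_d^{(1)}(n) \geq \rho(T_{5,d}; n)$. By \eqref{QtoS} it therefore suffices to construct an injection $\phi$ from $S^N$ into the set of partitions of $n$ with parts in $T_{5,d}$, since this will yield $\rho(T_{5,d};n) \geq \rho(S_d^N;n) = Q_{d-N}^{(1,-)}(n)$. The natural candidate replaces each part $x_i^N$ of $\lambda\in S^N$ by $y_i$: Lemma~\ref{lem: xi-yi-nonnegative} makes this work cleanly for $i \geq 3$, since the slack $\alpha(\lambda) = \sum_{i\geq 3}(x_i^N - y_i)p_i$ is nonnegative and can be absorbed into extra copies of $y_1 = 1$. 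The obstruction appears at $i = 2$, where $y_2 - x_2^N = N - 2 > 0$ when $N>2$: swapping all $p_2$ copies of $x_2^N$ for $y_2$'s creates a deficit of $(N-2)p_2$ units of sum, which must be supplied by the $p_1$ preexisting copies of $1$ together with the slack $\alpha$.

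This motivates splitting $S^N$ into an easy case $p_1 + \alpha \geq (N-2)p_2$ and a hard case $p_1 + \alpha < (N-2)p_2$. In the easy case, the plan is to define $\phi(\lambda)$ to have $p_i$ copies of $y_i$ for each $i \geq 2$ together with $p_1 + \alpha - (N-2)p_2$ copies of $y_1 = 1$; a direct computation verifies that $\phi(\lambda)$ is a partition of $n$ with parts in $T_{5,d}$, and the explicit inverse---read off $p_i = q_i$ for $i \geq 2$, compute $\alpha$, and set $p_1 = q_1 + (N-2)q_2 - \alpha$---establishes injectivity on this case. In the hard case, Lemma~\ref{lem: p2-geq-8} supplies the crucial bound $p_2 \geq 8$, which opens the door to a modified construction: pool a carefully chosen block of $x_2^N$'s (possibly together with some copies of $1$) and exchange it for a suitable combination of larger elements of $T_{5,d}$ (for example, one copy of $y_6 = 2d+1$ together with fresh copies of $y_1$), chosen so that the image carries a distinguishing signature not achievable from the easy-case map. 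The block size and the particular target $y_j$ will depend on $N$, but the surplus $p_2 \geq 8$ is more than enough to perform the exchange without exhausting the supply of $x_2^N$'s.

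The principal obstacle is the hard case, since the construction there must simultaneously (a) produce only parts lying in $T_{5,d}$, (b) admit an explicit inverse, and (c) yield an image disjoint from the image of the easy case, so that $\phi$ is globally injective. Each requirement rests on $p_2 \geq 8$ together with the fine structure of the residues of $x_2^N$ modulo the elements of $T_{5,d}$, and because those residues depend on $N$ a short case analysis in $N$ is to be expected. Once $\phi$ is in hand and shown to be injective, the bound $\rho(T_{5,d};n) \geq \rho(S_d^N;n) = Q_{d-N}^{(1,-)}(n)$ follows, and combining with Lemma~\ref{lem: qd-greater-than-t5d} completes the proof of Lemma~\ref{bigbiglemon}.
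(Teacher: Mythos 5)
Your reduction to constructing an injection $S^N \hookrightarrow T$ via Lemma \ref{lem: qd-greater-than-t5d} and \eqref{QtoS}, your identification of the obstruction at $i=2$, your split into the cases $p_1+\alpha \geq (N-2)p_2$ and $p_1+\alpha < (N-2)p_2$, and your map on the easy case all match the paper exactly. But the hard case is the entire substance of this lemma, and there you have only a plan, not a proof. You say one should ``pool a carefully chosen block of $x_2^N$'s and exchange it for a suitable combination of larger elements of $T_{5,d}$,'' but you never specify the block, the target, or why the result is injective and disjoint from the easy-case image --- and these are precisely the points where the difficulty lives. Any such exchange (the paper converts pairs of $x_2^N=d-N+4$ into one copy of $y_5=d+16$ plus $d-2N-8$ copies of $1$) necessarily merges the contribution of $p_2$ into a slot $q_j$ that already receives $p_j$ from other parts, so information is destroyed and injectivity fails without an additional device. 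Your suggested target $y_6=2d+1$ has the same problem: $q_6$ would conflate pooled $x_2^N$'s with genuine parts $x_6^N$.

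The paper's resolution is to further stratify $S_2^N$ by $\beta = \lfloor (p_1+p_5)/(d-N-1)\rfloor$ and to encode $2\beta+\varepsilon$ (with $\varepsilon$ the parity of $p_2$) into the $q_2$ slot, which is available because in the hard case one can show the honest count of $y_2$'s can be sacrificed; recovering $(p_1,p_2,p_5)$ from $(q_1,q_2,q_5)$ then requires the linear-algebra argument in \eqref{eq: big-lemma-injective-1}--\eqref{injective-label-14}, and disjointness from the image of $\varphi_1^N$ requires the separate estimates showing $p_1'<0$ in the subcases $p_2=8$ and $p_2\geq 9$ (this is where the threshold $d\geq 46N-79$ is actually consumed). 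Your proposal contains none of this: no definition of the map on $S_2^N$, no verification that its values are partitions of $n$ with parts in $T_{5,d}$, no inverse, and no disjointness argument. As written, the proof is incomplete at its central step.
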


\begin{proof}
We first note that our bound on $d$ allows us to apply Lemma \ref{lem: qd-greater-than-t5d}, so we have the inequality $q_d^{(1)}(n) \geq \rho(T_{5,d};n)$, and thus by \eqref{QtoS} it suffices to show 
\begin{equation}\label{goal}
\rho(T_{5,d};n)\geq \rho(S_d^N; n).
\end{equation} 

Recall that for fixed $d$ and $n$ we write $S^N$ to denote the set of partitions of $n$ with parts in $S_d^N$, and for $\lambda\in S^N$, we let $p_i$ denote the number of times $x_i^N$ occurs as a part in $\lambda$.  Furthermore write $T$ to denote the set of partitions of $n$ with parts in $T_{5,d}$, and for $\mu\in T$, let $q_i$ denote the number of times $y_i$ occurs as a part in $\mu$. Then $|S^N| = \rho(S_d^N; n)$ and $|T| = \rho(T_{5,d}; n)$, so to prove \eqref{goal}, it suffices to construct an injection $\varphi^N : S^N \hookrightarrow T$. 

We decompose $S^N$ into the subsets 
\begin{align}
S_1^N &:=\{\lambda \in S^N \mid p_1 + \alpha \geq (N-2)p_2\}, \label{Definition: S_1} \\
S_2^N &:= \{ \lambda \in S^N \mid p_1 + \alpha < (N-2)p_2 \}, \nonumber
\end{align}
and we further partition $S_2^N$ for integers $\beta \geq 0$ by
\begin{equation}\label{Definition: S_2,beta}
    S_{(2,\beta)}^N := \left\{ \lambda \in S_2^N \mid \beta = \left\lfloor \frac{p_1 + p_5}{d-N-1} \right\rfloor \right\}. 
\end{equation}
By inspection, it is clear that $S^N$ is the disjoint union of the sets $S_1^N$ and $S_{(2,\beta)}^N$ for all $\beta\geq 0$.  Thus we can construct $\varphi^N$ piecewise by constructing injections $\varphi_1^N : S_1^N \hookrightarrow T$ and $\varphi_{(2,\beta)}^N : S_{(2,\beta)}^N \hookrightarrow T$ for each $\beta\geq 0$ that have mutually disjoint images.  To describe such maps, given $\lambda\in S^N$, we define its image in $T$ by specifying the $q_i$ associated to the image in terms of the $p_i$ associated to $\lambda$.   Also, recall by \eqref{alpha} that 
\[
\alpha = \alpha(\lambda) :=\sum_{i \geq 3}(x_i^N-y_i)p_i. 
\]

Define $\varphi_1^N:S_1^N\rightarrow T$ by
\begin{align*}
    q_i=\begin{cases}
    p_1+\alpha-(N-2)p_2, & \text{ if } i=1\\
    p_i, & \text{ if }  i\geq 2.
    \end{cases}
\end{align*}

We first show $\varphi_1^N$ is well defined. Given $\lambda \in S_1^N$, we have by definition of $S_1^N$ that $p_1 + \alpha \geq (N-2)p_2$.  Thus each $q_i\geq 0$ so that $\varphi_1^N(\lambda)$ is indeed a partition into parts from $T_{5,d}$. Furthermore, we see that $\varphi_1^N(\lambda)$ is a partition of $n$, i.e., $\varphi_1^N(\lambda)\in T$, as
\begin{multline*}
\sum_{i\geq 1}q_iy_i = (p_1+\alpha-(N-2)p_2)+p_2(d+2)+\sum_{i\geq 3}p_iy_i = p_1+(d-N+4)p_2+\sum_{i\geq 3}p_ix_i^N = \sum_{i\geq 1}p_ix_i^N = n.
\end{multline*}

To see that $\varphi_1^N$ is injective, suppose $\lambda,\lambda' \in S_1^N$ such that $\varphi_1^N(\lambda)=\varphi_1^N(\lambda')$.  Let $p_i'$ and $q_i'$ denote the number of times $x_i^N$ and $y_i$ occur in $\lambda'$ and $\varphi_1^N(\lambda')$, respectively, and let $\alpha' = \sum_{i\geq 3}(x_i^N - y_i) p_i'$.  Then $q_i=q_i'$ for all $i$ implies that $p_i=p_i'$ for all $i\geq 2$ and $p_1+\alpha-(N-2)p_2=p_1'+\alpha'-(N-2)p_2'$. Since $p_i=p_i'$ for all $i\geq 2$ implies $\alpha=\alpha'$, we have $p_1=p_1'$ and hence that $\lambda=\lambda'$. So $\varphi_1^N : S_1^N \hookrightarrow T$ as desired.

Next, for fixed $\beta \geq 0$, given $\lambda \in S_{(2, \beta)}^N$, let 
\[
\varepsilon=\varepsilon(\lambda) := \begin{cases}
0      & \text{ if } p_2 \text{ is even}, \\
1      &  \text{ if } p_2 \text{ is odd}.
\end{cases}
\]
Then define $\varphi_{(2,\beta)}^N: S_{(2, \beta)}^N \rightarrow T$ by
\begin{align*}
   q_i=\begin{cases}
    p_1+\alpha+\frac{(p_2+\varepsilon)(d-2N-8)}{2}+28\beta+(26+N)\varepsilon, & \text{ if } i=1\\
    2\beta+\varepsilon, & \text{ if } i=2\\
    p_5+\frac{p_2+\varepsilon}{2}-2\beta-2\varepsilon, & \text{ if } i=5\\
    p_i, & \text{ if } i\neq 1,2,5,
    \end{cases}
\end{align*}
To see that $\varphi_{(2,\beta)}^N$ is well defined, we first observe that since $d \geq \max\{63,46N-79\}$, we have easily that $q_i\geq 0$ for all $i \neq 5$. To prove $q_5\geq 0$, it suffices to show that $p_2 - 3\varepsilon \geq 4\beta$.  By the definitions \eqref{Definition: S_2,beta}, \eqref{alpha}, \eqref{Definition: S_1}, as well as $d \geq \max\{63,46N-79\}$, it follows that
\[
4\beta \leq 4 \left(\frac{p_1 + p_5}{d-N-1}\right) \leq 4 \left(\frac{p_1 + \alpha}{d-N-1}\right) < \frac{4(N-2)p_2}{d-N-1} \leq \frac{p_2}{2}.
\]
Moreover, the hypotheses of Lemma \ref{lem: p2-geq-8} are satisfied, so $p_2\geq 8$.  Thus, 
\[
4\beta < \frac{p_2}{2} = p_2 - \frac{p_2}{2} < p_2 - 3 \leq p_2-3\varepsilon.
\]
Thus each $q_i\geq 0$ so that $\varphi_{(2,\beta)}^N(\lambda)$ is indeed a partition into parts from $T_{5,d}$.  Furthermore, we see that $\varphi_{(2,\beta)}^N(\lambda)$ is a partition of $n$, i.e., $\varphi_{(2,\beta)}^N(\lambda)\in T$, as
\begin{align*}
\sum_{i\geq 1}q_iy_i
&=\left( p_1+\alpha+\frac{(p_2+\varepsilon)(d-2N-8)}{2}+28\beta+(26+N)\varepsilon\right)
+(2\beta+\varepsilon)(d+2)\\
& \qquad \qquad +\left(p_5+\frac{p_2+\varepsilon}{2}-2\beta-2\varepsilon\right)(d+16)+\sum_{i\neq 1,2,5}p_iy_i\\
&=p_1+\frac{(p_2+\varepsilon)(2d-2N+8)}{2}+(-d+N-4)\varepsilon+\sum_{i\geq 3}p_ix_i^N\\
&=p_1+p_2(d-N+4)+\sum_{i\geq 3}p_ix_i^N
=\sum_{i\geq 1}p_ix_i^N = n.
\end{align*}

To see that $\varphi_{(2,\beta)}^N$ is injective, suppose $\lambda,\lambda' \in S_{(2, \beta)}^N$ such that $\varphi_{(2, \beta)}^N(\lambda) = \varphi_{(2, \beta)}^N(\lambda')$.  As in the previous case, let $p_i'$ and $q_i'$ denote the number of times $x_i^N$ and $y_i$ occur in $\lambda'$ and $\varphi_{(2, \beta)}^N(\lambda')$, respectively, $\alpha' = \sum_{i\geq 3}(x_i^N - y_i) p_i'$, and also let $\varepsilon'$ denote the residue of $p_2'$ modulo $2$.  Then $q_i=q_i'$ for all $i$ implies that $p_i=p_i'$ for all $i \neq 1,2, 5$ and $\varepsilon = \varepsilon'$.  From $q_1 = q_1'$ and $q_5 = q_5'$, we obtain that
\begin{equation} \label{eq: big-lemma-injective-1}
p_1+(2d-3N-8)p_5+\frac{p_2(d-2N-8)}{2}=p_1'+(2d-3N-8)p_5'+\frac{p_2'(d-2N-8)}{2},
\end{equation}
\begin{equation} \label{eq: big-lemma-injective-2}
p_5+\frac{p_2}{2} = p_5'+\frac{p_2'}{2}.
\end{equation}
Multiplying \eqref{eq: big-lemma-injective-2} by $(d-2N-8)$ and subtracting this from \eqref{eq: big-lemma-injective-1} gives
\begin{equation} \label{eq: big-lemma-injective 3}
    p_1 + (d-N) p_5 = p_1' + (d-N) p_5'.
\end{equation}
From \eqref{Definition: S_2,beta}, we see that $p_1 + p_5 = \beta (d-N-1) + m$ and $p_1' + p_5' = \beta (d-N-1) + m'$, where $0\leq m,m' < d-N-1$. Thus subtracting yields
\begin{equation}\label{mm'}
    (p_1 - p_1') + (p_5 - p_5') = m - m'.
\end{equation}
Combining \eqref{mm'} and \eqref{eq: big-lemma-injective 3} gives
\begin{equation}\label{injective-label-14}
    m' - m = (d-N-1)(p_5' - p_5).
\end{equation}
Since $0\leq m,m' < d-N-1$, \eqref{injective-label-14} implies that $m=m'$ and thus $p_5=p_5'$.  Thus from \eqref{eq: big-lemma-injective-2} it follows that $p_2 = p_2'$, so \eqref{eq: big-lemma-injective-1} yields that $p_1 = p_1'$, and hence $\lambda=\lambda'$. So $\varphi_{(2, \beta)}^N : S_{(2, \beta)}^N \hookrightarrow T$ as desired. 

It remains to show that the images of all of the $\varphi_1^N$ and $\varphi_{(2, \beta)}^N$ are distinct.  First observe that if $\beta\neq \beta'$, $\lambda \in S_{(2, \beta)}^N$, and $\lambda' \in S_{(2, \beta')}^N$, then $\varphi_{(2,\beta)}^N(\lambda) \neq \varphi_{(2,\beta')}^N(\lambda')$ since $q_2 \neq q_2'$.  

Now fix $\beta \geq 0$, and suppose toward contradiction that $\lambda \in S_{(2,\beta)}^N$ and $\lambda' \in S_1^N$ such that $\varphi_{(2,\beta)}^N(\lambda)=\varphi_1^N(\lambda')$. Then $q_i=q_i'$ for all $i$ immediately gives that $p_i=p_i'$ for all $i\neq 1,2,5$ and 

\begin{align*}
p_1' + \alpha' - (N-2)p_2'  &=  p_1+\alpha+\frac{(p_2+\varepsilon)(d-2N-8)}{2}+28\beta+(26+N)\varepsilon, \\
p_2'  &=  2\beta+\varepsilon, \\
p_5'  &=  p_5+\frac{p_2+\varepsilon}{2}-2\beta-2\varepsilon, 
\end{align*}
which yield that
\begin{multline}\label{eq: general-big-lemma-injective 2}
    p_1'+(2d-3N-8)p_5'=p_1+(2d-3N-8)p_5+\frac{(p_2+\varepsilon)(d-2N-8)}{2}+(2N+24)(\beta+\varepsilon),
\end{multline}
\begin{equation}\label{eq: general-big-lemma-injective 3}
    p_5'=p_5+\frac{p_2+\varepsilon}{2}-2(\beta+\varepsilon).
\end{equation}
Multiplying \eqref{eq: general-big-lemma-injective 3} by $(2d-3N-8)$ and subtracting this from \eqref{eq: general-big-lemma-injective 2} gives
\begin{equation}\label{eq: general-big-lemma-injective 4}
    p_1'=p_1+\frac{(p_2+\varepsilon)(N-d)}{2}+(4d-4N+8)\beta+(4d-4N+8)\varepsilon.
\end{equation}
From \eqref{Definition: S_1} and \eqref{Definition: S_2,beta} we have 
\begin{align}\label{ineq1}
p_1 & \leq p_1+\alpha <(N-2)p_2, \\
\beta & \leq \frac{p_1+p_5}{d-N-1}\leq \frac{p_1+\alpha}{d-N-1}<\frac{(N-2)p_2}{d-N-1}. \nonumber
\end{align}
Thus, \eqref{eq: general-big-lemma-injective 4} and \eqref{ineq1} yield that
\begin{align}\label{ineq2}
p_1'& < (N-2)p_2+\frac{(p_2+\varepsilon)(N-d)}{2}+(4d-4N+8)\left(\frac{(N-2)p_2}{d-N-1}+ \varepsilon \right)  \nonumber \\
&=\frac{(-d^2+(12N-19)d-11N^2+33N-28)p_2+(7d^2+d(-14N+9)+7N^2-9N-16)\varepsilon}{2d-2N-2}.
\end{align}

Since the hypotheses of Lemma \ref{lem: p2-geq-8} are satisfied, we have that $p_2\geq 8$.  If $p_2=8$, then $\varepsilon=0$ and \eqref{ineq2} becomes
\[
p_1' <  
\frac{-4d^2+(48N-76)d -44N^2+132N-112}{d-N-1}.
\]
Since $d \geq \max\{63,46N-79\}$, the denominator is always positive.  But when $d > \frac{12N-19+\sqrt{100N^2-324N+249}}{2}$, the numerator is negative, which would yield a contradiction since $p_1'\geq 0$.  Since $100N^2-324N+249 < (10N-16)^2$, it suffices to show that $d \geq 11N - 17$, which follows easily from the fact that $d \geq \max\{63,46N-79\}$.  Thus we have a contradiction in the case when $p_2=8$.

Suppose $p_2\geq 9$. Since $d \geq \max\{63,46N-79\}$, for all $N\geq 2$ we have
\begin{align*}
-d^2+d(12N-19)-11N^2+33N-28 & \leq 0, \\
7d^2+d(-14N+9)+7N^2-9N-16 & \geq 0.
\end{align*}
Thus \eqref{ineq2} yields that
\[
p_1' \leq \frac{-d^2+(47N-81)d-46N^2+144N-134}{d-N-1}.
\]
As above, when $d > \frac{47N-81+\sqrt{2025N^2-7038N+6025}}{2}$ the right hand side is negative which contradicts the nonnegativity of $p_1'$.  Since $2025N^2-7038N+6025 < (45N-78)^2$, it suffices to show that $d \geq 46N - 79$, which is immediate from our bound $d \geq \max\{63,46N-79\}$.  Thus we have a contradiction in the case when $p_2\geq 9$, and we have shown that $\varphi_{(2,\beta)}^N(\lambda) \neq \varphi_1^N(\lambda')$ for any $\lambda \in S_{(2,\beta)}^N$ and $\lambda' \in S_1^N$.

Thus $\varphi_1^N$ and $\varphi_{(2,\beta)}^N$ for each $\beta\geq 0$ together form a piecewise injective map $\varphi^N : S^N \hookrightarrow T$, which gives our desired inequality. 
\end{proof}

\section{Proof of Theorem \ref{thm:gen_kp} and Corollary \ref{thm:gen_dkst}}\label{conclusion}
We now demonstrate that the methods of Inagaki and Tamura \cite{Inagaki-Tamura} together with Corollary \ref{littlelemon} yield the generalized Kang-Park type result given in Theorem \ref{thm:gen_kp}. 

\begin{proof}[Proof of Theorem \ref{thm:gen_kp}]
We first suppose that $n\geq d+2a$.  Write $\hat{n}_a$ and $\hat{d}_a$ to denote the least nonnegative residue of $-n$ and $-d$ modulo $a$, respectively, so that $\lceil\frac{n}{a}\rceil=\frac{n+\hat{n}_a}{a}$ and $\lceil\frac{d}{a}\rceil=\frac{d+\hat{d}_a}{a}$.  Then using Lemma \ref{Lem: st-thm with ceiling}, Corollary \ref{littlelemon}, and Lemma \ref{a-to-1}, we obtain
\begin{equation*}
    q_d^{(a)}(n)\geq q_{\frac{d+\hat{d}_a}{a}}^{(1)}\left(\frac{n+\hat{n}_a}{a}\right)\geq Q_{\frac{d+\hat{d}_a}{a}-4}^{(1,-)}\left(\frac{n+\hat{n}_a}{a}\right)=Q_{d+\hat{d}_a-a-3}^{(a,-)}\left(n+\hat{n}_a\right).
\end{equation*}
Thus it remains to show that
\begin{equation}\label{lastineq}
    Q_{d+\hat{d}_a-a-3}^{(a,-)}(n+\hat{n}_a)\geq Q_d^{(a,-)}(n).
\end{equation}

Define
\begin{align*}
&S:=\{x\in\mathbb{N}\mid x\equiv \pm a \!\!\!\!\pmod{d+3}\}\setminus\{d+3-a\},\\
    &T:=\{x\in\mathbb{N}\mid x\equiv \pm a \!\!\!\!\pmod{d+\hat{d}_a-a}\}\setminus\{d+\hat{d}_a-2a\},
\end{align*}
and observe that $Q_d^{(a,-)}(n) = \rho(S;n)$ and $Q_{d+\hat{d}_a-a-3}^{(a,-)}(n+\hat{n}_a) = \rho(T;n+\hat{n}_a)$. Letting $x_i$ and $y_i$ denote the $i^{\text{th}}$ smallest elements of $S$ and $T$, respectively, we have that $x_1=y_1=a$, and
\[
\begin{tabular}{rrl}
$x_{2i} = i(d+3) + a$, & $y_{2i} = i(d+\hat{d}_a-a) + a$, & for $i\geq 1$,  \\
$x_{2i-1} = i(d+3) - a$, & $y_{2i-1} = i(d+\hat{d}_a-a) - a$, & for $i\geq 2$.\\
\end{tabular}
\]
Clearly $a\mid y_i$ for all $i\geq 1$, and moreover, $x_i\geq y_i$ for all $i\geq 1$ since $0 \leq \hat{d}_a < a$.  Thus by Lemma \ref{IT-modiefied-ST}, we have \eqref{lastineq} as desired.

We now consider $1\leq n\leq d+2a-1$. As in the proof of Lemma \ref{bigsmalllemon}, we observe that $q_d^{(a)}(n)$ is a weakly increasing function, however $Q_d^{(a,-)}(n)$ is not.    

If $1\leq n\leq a-1$, then $q_d^{(a)}(n)=0=Q_d^{(a,-)}(n)$.  Also, $q_d^{(a)}(a) = 1$ and $Q_d^{(a,-)}(n)\leq 1$ for all $a\leq n \leq d+a+2$ since $a$ is the only available part.  Thus it remains to consider when $d+a+3 \leq n \leq d+2a-1$, which only occurs for $a\geq 4$.

By our hypothesis that $\left\lceil\frac{d}{a}\right\rceil \geq 105$, it follows that $d+2a-1 < 2d-a+6$.  Thus the only available parts for a partition counted by $Q_d^{(a,-)}(n)$ when $d+a+3 \leq n\leq d+2a-1$ are $a$ and $d+a+3$.  Furthermore, the part $d+a+3$ can occur at most once since $2d+2a+6 > d+2a-1$.  Thus a partition counted by $Q_d^{(a,-)}(n)$ when $d+a+3 \leq n\leq d+2a-1$ is either a sum of parts of size $a$, which can only occur when $n\equiv 0 \pmod{a}$, or $d+a+3$ plus a sum of parts of size $a$, which can only occur when $n\equiv d+3 \pmod{a}$.  Thus $Q_d^{(a,-)}(n)\leq 1\leq q_d^{(a)}(n)$ except when $d \equiv -3 \pmod{a}$ and $n\equiv 0 \pmod{a}$ simultaneously.  But if $d=ka-3$ for $k\geq 1$, then $(k+1)a \leq n \leq (k+2)a - 4$, so the only exception occurs when $n=d+a+3$.
\end{proof}

We now prove Corollary \ref{thm:gen_dkst}.

\begin{proof}[Proof of Corollary \ref{thm:gen_dkst}]
By definition, $\Delta_d^{(a,-,-)}(n) \geq \Delta_d^{(a,-)}(n)$, since there are fewer parts available for partitions counted by $\Delta_d^{(a,-,-)}(n)$.  Thus by Theorem \ref{thm:gen_kp}, we have $\Delta_d^{(a,-,-)}(n) \geq 0$ for any $a,d \geq 1$ such that $\left\lceil\frac{d}{a}\right\rceil \geq 105$ and $n\geq1$, except possibly when $d\equiv -3 \pmod{a}$ and $n=d+a+3$.  However in these cases, observe that $Q_d^{(a,-,-)}(d+a+3) = 1$, since $d+a+3$ is the only available part by definition.  Also, $q_d^{(a)}(d+a+3) \geq 1$ since $d+a+3$ is a partition counted by $q_d^{(a)}(d+a+3)$.  Thus $q_d^{(a)}(n) \geq   Q_d^{(a,-,-)}(n)$ in all of our considered cases.
\end{proof}
 
\section{Concluding Remarks}

By work of Kang and Kim\footnote{Note that Kang and Kim use different notation that what we are using here.} \cite[Thm. 1.1]{KANG-KIM} and the fact that $Q_{d}^{(a)}(n) \geq Q_{d}^{(a,-)}(n)$, it follows that when $\gcd(a, d-N) = 1$,
\[
\lim_{n \rightarrow \infty}(q_d^{(a)}(n) - Q_{d-N}^{(a,-)}(n)) = \infty,
\]
for all $N < d+3- \lfloor \frac{\pi^2}{3A_d}\rfloor$, where $A_d = \frac{d}{2}\log^2{\alpha_d} + \sum_{r=1}^\infty r^{-2}\alpha_d^{rd}$, with $\alpha_d$ the unique real root of $x^d +x - 1$ in the interval $(0, 1)$.  Thus it may be possible to generalize Theorem \ref{biglemon} to an inequality of the form $q_d^{(a)}(n) \geq Q_{d-N}^{(a,-)}(n)$ for more general $a$.

\section*{Acknowledgements}
We thank Ryota Inagaki and Ryan Tamura for their helpful correspondence and interesting idea.

\end{document}